\documentclass[aps,11pt,twoside, nofootinbib, superscriptaddress]{revtex4}
\usepackage{amsthm}
\usepackage{amsmath,latexsym,amssymb,verbatim,enumerate,graphicx}
\usepackage{color}
\usepackage[utf8]{inputenc}
\usepackage[T1]{fontenc}
\usepackage[polish]{babel}

\newtheorem{definition}{Definition} 

\newtheorem{lemma}[definition]{Lemma}

\newtheorem{thm}[definition]{Theorem}

\newtheorem{remark}[definition]{Remark}

\newtheorem*{rep@theorem}{\rep@title}
\newcommand{\newreptheorem}[2]{%
\newenvironment{rep#1}[1]{%
 \def\rep@title{#2 \ref{##1} (restatement)}%
 \begin{rep@theorem}}%
 {\end{rep@theorem}}}
\makeatother

\newreptheorem{thm}{Theorem}
\newreptheorem{lem}{Lemma}

\def\ba#1\ea{\begin{align}#1\end{align}}
\def\ban#1\ean{\begin{align*}#1\end{align*}}


\newcommand{\be}{\begin{equation}}
\newcommand{\ee}{\end{equation}}





\begin{document}
\title{Limit theorems for some Markov operators}
\author{Sander Hille}
\affiliation{Mathematical Institute, Leiden University, P.O. Box 9512, 2300 RA Leiden, The Netherlands}
\author{Katarzyna Horbacz}
\affiliation{Institute of Mathematics, University of Silesia, Bankowa 14, 40-007 Katowice, Poland}
\author{Tomasz Szarek}
\address{Institute of Mathematics, University of Gda\'nsk, Wita Stwosza 57, 80-952 Gda\'nsk, Poland}
\author{Hanna Wojew\'odka}
\affiliation{Institute of Mathematics, University of Gda\'nsk, Wita Stwosza 57, 80-952 Gda\'nsk, Poland}
\affiliation{National Quantum Information Center of Gda\'{n}sk, 81-824 Sopot, Poland}
\affiliation{Institute of Theoretical Physics and Astrophysics, University of Gda\'{n}sk, 80-952 Gda\'{n}sk, Poland}

\begin{abstract} 
The exponential rate of convergence and the Central Limit Theorem for some Markov operators are established. The operators correspond to iterated function systems which, for example, may be used to generalize the cell cycle model given by Lasota and Mackey \cite{lasotam}.
\end{abstract}


\thanks{The work of Tomasz Szarek has been partly supported by the National Science Centre of Poland, grant number DEC-2012/07/B/ST1/03320.} 
\maketitle

\section{Introduction}

We are concerned with Markov operators corresponding to iterated function systems. The main goals of the paper are to prove exponential rate of convergence and establish the Central Limit Theorem (CLT). It should be indicated that the first result implies the second.  The operators under consideration are more general than those used by Lasota and Mackey in \cite{lasotam}. The authors studied therein some cell cycle model, in which the rate of convergence is already evaluated by Wojew\'odka \cite{hw}. Hille at el. \cite{hille} proposed the generalization of the model and assured the existence of a unique invariant distribution in it. We have managed to evaluate the rate of convergence, which provides asymptotic stability at once, as well as allows us to show the CLT. The results bring some information important from biological point of view. To get more details on biological background of the research, see Tyson and Hannsgen \cite{tysonhannsgen} or Murray and Hunt \cite{murrayhunt}.

In our paper we base on coupling methods introduced by Hairer in \cite{hairer}. In the same spirit, exponential rate of convergence was proven in \cite{sleczka} for classical iterated function systems (see also \cite{hairerm} or \cite{kapica}). However, we use coupling methods not only to evaluate the rate of convergence. It turns out that properly constructed coupling measure, if combined with the results for stationary ergodic Marokv chains given by Maxwell and Woodroofe \cite{woodr}, is crucial in the proof of the CLT, too. If we have the coupling measure already constructed, the proof of the CLT is brief and less technical than typical proofs based on Gordin's martingale approximation. What led us to this intriguing solution was an unsuccessful attempt to follow the pattern given by Komorowski and Walczuk \cite{komorowskiwalczuk}. It is worth mentioning here that an auxiliary model, described by some non-homogenous Markov chain, is needed to take adventage of coupling methods. 
While reading the paper, one may see that it is a~bright idea to express the Markov operator of interest by means of an auxiliary one.

Similar approach may also help to establish the Law of the Iterated Logarithm (LIL). The proof of the LIL is supposed to be provided in a future paper. Some ideas useful for proving it may be adapted from Bo\l{}t et al. \cite{bms}. However, we strongly believe that using an appropriate coupling measure will, again, make the proof much easier.

The organization of the paper goes as follows. Section $2$ introduces basic notation and definitions that are needed throughout the paper. Most of them are adapted from \cite{billingsley}, \cite{tweedie}, \cite{lasotay} and \cite{szarek}. Mathematical derivation of the generalized cell cycle model is provided in Section $3$. The main theorems (Theorem $\ref{main}$ and Theorem $\ref{CTG}$) are also formulated there. Sections $5$-$7$ are devoted to the construction of coupling measure for iterated function systems. Thanks to the results presented in Section $8$ we are finally able to present the proofs of main theorems. Indeed, the exponential rate of convergence is established in Section $9$ and the CLT in Section $10$.

\section{Notation and basic definitions}

Let $(X,\varrho)$ be a Polish space. We denote by $B_X$ the family of all Borel subsets of $X$. 
Let $B(X)$ be the space of all bounded and measurable functions $f:X\to R$ with the supremum norm and write $C(X)$ for its subspace of all bounded and continuous functions with the supremum norm. 

We denote by $M(X)$ the family of all Borel measures on $X$ and by $M_{\text{fin}}(X)$ and $M_1(X)$ its subfamilies such that $\mu(X)<\infty$ and~$\mu(X)=1$, respectively. 
Elements of $M_{\text{fin}}(X)$ which satisfy $\mu(X)\leq 1$ are called sub-probability measures. To simplify notation, we write
\[\langle f,\mu\rangle =\int_X f(x)\mu(dx)\quad\text{for}\: f:X\to R,\: \mu\in M(X).\]
An operator $P:M_{\text{fin}}(X)\to M_{\text{fin}}(X)$ is called a Markov operator if
\begin{enumerate}
\item $P(\lambda_1\mu_1+\lambda_2\mu_2)=\lambda_1 P\mu_1+\lambda_2 P\mu_2\quad\text{ for }\:\lambda_1,\lambda_2\geq 0,\; \mu_1, \mu_2\in M_{\text{fin}}(X)$;
\item $P\mu(X)=\mu(X)\quad\text{ for \:$\mu\in M_{\text{fin}}(X)$}$.
\end{enumerate}
Markov operator $P$ for which there exists a linear operator $U:B(X)\to B(X)$ such that
\[\langle Uf,\mu\rangle =\langle f,P\mu\rangle \quad\text{for}\: f\in B(X),\:\mu\in M_{\text{fin}}(X)\]
is called a regular operator. We say that a regular Markov operator is Feller if $U(C(X))\subset C(X)$. Every Markov operator $P$ may be extended to the space of signed measures on $X$ denoted by $M_{sig}(X)=\{\mu_1-\mu_2:\; \mu_1,\mu_2\in M_{\text{fin}}(X)\}$. For $\mu\in M_{sig}(X)$, we denote by $\|\mu\|$ the total variation norm of $\mu$, i.e.,
\[\|\mu\|=\mu^+(X)+\mu^-(X),\]
where $\mu^+$ and $\mu^-$ come from the Hahn-Jordan decomposition of $\mu$ (see \cite{halmos}). In particular, if $\mu$ is non-negative, $\|\mu\|$ is the total mass of $\mu$. 
For fixed $\bar{x}\in X$ we also consider the space $M_1^1(X)$ of all probability measures with finite first moment, i.e., $M_1^1(X)=\{\mu\in M_1(X):\:\int_X\varrho(x,\bar{x})\mu(dx)<\infty\}$. The family is independent of choice of $\bar{x}\in X$. 
We call $\mu_*\in M_{\text{fin}}(X)$ an invariant measure of $P$ if $P\mu_*=\mu_*$. 
For $\mu\in M_{\text{fin}}(X)$, we define the support of $\mu$ by
\begin{align*}
\mu=\{x\in X:\; \mu(B(x,r))>0\quad \text{for all }\:r>0\},
\end{align*}
where $B(x,r)$ is an open ball in $X$ with center at $x\in X$ and radius $r>0$. By $\bar{B}(x,r)$ we denote a~closed ball with center at $x\in X$ and radius $r>0$.

In $M_{sig}(X)$, we introduce the Fortet-Mourier norm
\[\|\mu\|_{\mathcal{L}}=\sup_{f\in\mathcal{L}}|\langle f,\mu\rangle |,\]
where $\mathcal{L}=\{f\in C(X):\;|f(x)-f(y)|\leq\varrho(x,y),\;|f(x)|\leq 1\;\text{ for }\:x,y\in X\}$. The space $M_1(X)$ with metric $\|\mu_1-\mu_2\|_{\mathcal{L}}$ is complete (see \cite{fortet}, \cite{rachev} or \cite{villani}).

We say that the sequence of Borel measures $(\mu_n)_{n\in N}\subset M_{fin}(X)$ converges weakly to the measure $\mu\in M_{fin}(X)$ if $\lim_{n\to \infty}\langle f,\mu_n\rangle=\langle f,\mu \rangle$ for all $f\in C(X)$. It is known (see Theorem 11.3.3, \cite{dudley}) that the following conditions are equivalent
\begin{itemize}
\item $(\mu_n)_{n\in N}$ converges weakly to $\mu$,
\item $\lim_{n\to\infty}\langle f,\mu_n\rangle =\langle f,\mu\rangle$ for all $f\in \mathcal{L}$,
\item $\lim_{n\to\infty}\|\mu_n-\mu\|_{\mathcal{L}}=0$,
\end{itemize}
where $(\mu_n)_{n\in N}\subset M_1(X)$ and $\mu\in M_1(X)$.

\section{Main idea and theorems}\label{sec:idea}

Recall that $(X,\varrho)$ is a Polish space and let $(\Omega,\mathcal{F},\text{Prob})$ be a probability space. 
Fix $T<\infty$. 
We consider a stochastically perturbed dynamical system. The state of $x_n$, for every $n\in N$, is determined by the formula
\[x_{n+1}=S(x_n,t_{n+1})\text{.}\]
We make the following assumptions. 

\begin{itemize}
\item[(I)] We consider a sequence $(t_n)_{n\in N}$ of independent random variables defined on $(\Omega,\mathcal{F},\text{Prob})$ with values in $[0,T]$. Distribution of $t_{n+1}$ conditional on $x_n=x$ is given by
\begin{equation}\label{distr_p}
\text{Prob}(t_{n+1}<t|x_n=x)=\int_0^tp(x,u)du, \quad 0\leq t\leq T,
\end{equation}
where $p:X\times[0,T]\to[0,\infty)$ is a~measurable and non-negative function. 
In addition, $p$ is normalized, i.e., $\int_0^Tp(x,u)du=1$ for $x\in X$.

\item[(II)] Let $S:X\times[0,T]\to X$ be a~continuous function which satisfies the Lipschitz type inequality
\begin{equation}\label{Lip}
\varrho(S(x,t),S(y,t))\leq \lambda(x,t)\varrho(x,y)\quad\text{ for }x,y\in X, t\in[0,T],
\end{equation}
where $\lambda:X\times[0,T]\to[0,\infty)$ is a~Borel measurable function such that 
\begin{equation}\label{def:a}
a:=\sup_{x\in X}\int_0^T\lambda(x,t)p(x,t)dt<1.
\end{equation}

\item[(III)] $\sup_{t\in[0,T]}\varrho\Big(S(\bar{x},t),\bar{x}\Big)<\infty$ for some $\bar{x}\in X$.

\item[(IV)] We assume that $p$ satisfies the Dini condition
\begin{align}
\begin{aligned}
\int_0^T|p(x,t)-p(y,t)|dt\leq \omega(\varrho(x,y))\quad\text{for }x,y\in X,
\end{aligned}
\end{align}
where $\omega:{R}_+\to{R}_+$, $\omega(0)=0$, is a non-decreasing and concave function such that
\begin{align*}
\begin{aligned}
\int_0^{\sigma}\frac{\omega(t)}{t}dt<+\infty\quad\text{for some }\sigma> 0.
\end{aligned}
\end{align*}
We can easily check that if $\zeta<1$, we have
\begin{align}\label{varphi}
\begin{aligned}
\varphi(t):=\sum_{n=1}^{\infty}\omega(\zeta^nt)<+\infty\quad\text{for every }t\geq 0
\end{aligned}
\end{align}
and $\lim_{t\to 0}\varphi(t)=0$.

\item[(V)] Function $p$ is bounded. We set $\delta:=\inf_{x\in X, t\in(0,T]}p(x,t)$, $M:=\sup_{x\in X, t\in[0,T]}p(x,t)$ and require $\delta>0$.

\end{itemize}

We further assume that, for each $A\in B_X$,
\begin{align*}
\begin{aligned}
\text{Prob}(x_{n+1}\in A):=\mu_{n+1}(A) \quad\text{and}\quad P\mu_n=\mu_{n+1}\text{,}
\end{aligned}
\end{align*}
where
\begin{align*}
\begin{aligned}
P\mu(A)=\int_X\Big[\int_0^T 1_A(S(x,t))p(x,t)dt\Big]\mu(dx)\text{.}
\end{aligned}
\end{align*}
In \cite{lasotam} the proof of asymptotic stability is given for the model, while the exponential rate of convergence is established thanks to some coupling methods in \cite{hw}\footnote{In both papers the results are proven for stronger assumptions.}.

Without loss of generality, we may think of $(X,\varrho)$ as a closed subset of some separable Banach space $H$. Then, trying to describe some intercellular processes more precisely, Hille et al. \cite{hille} proposed a more general dynamical system
\[x_{n+1}=S(x_n,t_{n+1})+H_{n+1},\]
where $(H_n)_{n\in N}$, $H_n\in H$, is a family of independent random variables with the same distribution given by a measure $\nu^{\varepsilon}$, which is independent of $S(x_n,t_{n+1})$ and its support stays in $\bar{B}(0,\varepsilon)$. 

For this reason, we need an additional assumption
\begin{itemize}
\item[(VI)] Let $\varepsilon_*<\infty$ be given. Fix $\varepsilon\in[0,\varepsilon_*]$. Let $\nu^{\varepsilon}$ be a Borel measure on $H$ such that its support is in $\bar{B}(0,\varepsilon)$. For every $x\in X$, we set
\begin{align}
\begin{aligned}
\nu^{\varepsilon}_x(\cdot)=\nu^{\varepsilon}(\cdot-x).
\end{aligned}
\end{align}
We assume that $S(x,t)+h\in X$ for every $t\in[0,T]$, $x\in X$ and $h$ from the support of $\nu^{\varepsilon}$. 
\end{itemize}

The Markov chain is given by the transition function $\Pi_{\varepsilon}:X\times B_X\to[0,1]$ of the form
\[\Pi_{\varepsilon}(x,A)=\int_0^Tp(x,t)\nu^{\varepsilon}_{S(x,t)}(A)dt.\]
Then, we may write the Markov operator $P_{\varepsilon}:M_1(X)\to M_1(X)$ as follows
\[P_{\varepsilon}\mu(A)=\int_X\Pi_{\varepsilon}(x,A)\mu(dx).\]

The case of deterministic protein production, i.e., when $\varepsilon=0$, fits to the framework presented by Lasota and Mackey \cite{lasotam} and the results obtained there.

Hille et al. \cite{hille} managed to show the existence of a unique invariant measure in the generalized model, described above. However, stability was not proven. We want to focus on evaluating the rate of convergence, which additionally provides asymptotic stability in the model and allows us to establish the CLT. The proof of the CLT is given in Section 10.

\begin{thm}\label{main}
Let $\mu\in M_1^1(X)$. Under assumptions (I)-(VI), there exist $C=C(\mu)>0$ and $q\in[0,1)$ such that
\[\|P_{\varepsilon}^n\mu-\mu_*\|_{\mathcal{L}}\leq Cq^n\quad \text{ for }n\in N.\]
\end{thm}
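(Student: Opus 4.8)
The plan is to run a coupling argument in the spirit of Hairer \cite{hairer} and \cite{sleczka}. First I would record a Lyapunov control. Taking $V(x)=\varrho(x,\bar{x})$ with the reference point $\bar{x}$ from (III), the Lipschitz bound \eqref{Lip} together with (III) and the bound $\varepsilon\le\varepsilon_*$ on the support of $\nu^{\varepsilon}$ gives a one-step drift
\[
\int_X V(u)\,\Pi_{\varepsilon}(x,du)\le a\,V(x)+b,
\]
for some finite $b$, where $a<1$ is the constant from \eqref{def:a}; indeed $\varrho(S(x,t)+h,\bar{x})\le \|h\|+\lambda(x,t)\varrho(x,\bar{x})+\sup_s\varrho(S(\bar{x},s),\bar{x})$. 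Iterating shows that first moments stay uniformly bounded, that $\mu_*$ itself lies in $M_1^1(X)$, and that $\int V\,d(P_{\varepsilon}^n\mu)$ is controlled for $\mu\in M_1^1(X)$; this is what lets the final constant $C$ depend only on $\int V\,d\mu$.

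Next I would construct the coupling. On the product space $X\times X$ I define a transition kernel that, given the current pair $(x,y)$, draws the jump times $(t,t')$ from a maximal coupling of the densities $p(x,\cdot)$ and $p(y,\cdot)$ on $[0,T]$ and uses the \emph{same} additive perturbation $h\sim\nu^{\varepsilon}$ for both coordinates. On the overlap event $\{t=t'\}$ — whose probability is at least $1-\tfrac12\omega(\varrho(x,y))$ by (IV), since $\int_0^T\min\{p(x,t),p(y,t)\}\,dt=1-\tfrac12\int_0^T|p(x,t)-p(y,t)|\,dt$ — the synchronous noise acts by the same translation in $H$ and is therefore isometric for $\varrho$, so that $\varrho(S(x,t)+h,S(y,t)+h)=\varrho(S(x,t),S(y,t))\le\lambda(x,t)\varrho(x,y)$; integrating against the overlap density, which is dominated by $p(x,t)$, contributes the factor $a<1$ via \eqref{def:a}. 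On the complementary event the two coordinates decouple, but the resulting distance is controlled by $V(x)+V(y)$ through the drift above. Altogether one reaches a one-step estimate of the schematic form
\[
\mathbb{E}\big[\varrho(X_1,Y_1)\mid(x,y)\big]\le a\,\varrho(x,y)+c\,\omega(\varrho(x,y))\big(V(x)+V(y)\big).
\]
Because the overlap probability depends on the current distance, the coupled process is exactly the auxiliary non-homogeneous chain alluded to in the introduction.

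The delicate point, and the main obstacle, is that $\omega$ is only a Dini modulus, not Lipschitz, so near the diagonal the defect $\omega(\varrho)$ can dominate the gain $a\,\varrho$ and a naive contraction in $\varrho$ fails. I would resolve this exactly as the summability in (IV) suggests: along a geometrically contracting trajectory the distance at step $k$ is of order $a^k\varrho_0$, so the accumulated Dini defects are bounded by $\varphi$ from \eqref{varphi} (with $\zeta$ chosen in $(a,1)$), which is finite and vanishes at $0$. Encoding this, I would pass from $\varrho$ to a suitably concave coupling (sub)distance $d$ on $X\times X$ — essentially $\varrho$ dressed with a factor built from $\varphi$ and the Lyapunov weight — and prove that the coupled kernel contracts $d$ by a uniform factor $q\in[0,1)$. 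Choosing the shape of $d$ so that a single inequality absorbs both the multiplicative contraction and the additive Dini defect is where most of the work lies.

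Finally I would assemble the rate. Since every $f\in\mathcal{L}$ is $1$-Lipschitz and bounded, $\|\cdot\|_{\mathcal{L}}$ is dominated by the transportation distance $W_d$ associated with $d$, so the contraction of $d$ yields $\|P_{\varepsilon}^n\mu-P_{\varepsilon}^n\nu\|_{\mathcal{L}}\le C\,q^n\,W_d(\mu,\nu)$ for measures of finite first moment. Taking $\nu=\mu_*$, which is invariant and belongs to $M_1^1(X)$ by the drift bound, and using $W_d(\mu,\mu_*)<\infty$ for $\mu\in M_1^1(X)$, gives $\|P_{\varepsilon}^n\mu-\mu_*\|_{\mathcal{L}}\le C(\mu)\,q^n$ with $C(\mu)$ governed by $\int_X\varrho(x,\bar{x})\,\mu(dx)$. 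Existence and uniqueness of $\mu_*$ are already supplied by Hille et al. \cite{hille}.
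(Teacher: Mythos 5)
Your setup is the right one --- the Lyapunov drift for $V=\varrho(\cdot,\bar{x})$, the synchronous coupling that draws the jump times from a maximal coupling of $p(x,\cdot)$ and $p(y,\cdot)$ and reuses the same translation $h$, and the observation that the whole difficulty sits in the Dini (rather than Lipschitz) modulus $\omega$ --- and your schematic one-step estimate $\mathbb{E}[\varrho(X_1,Y_1)\mid(x,y)]\le a\varrho(x,y)+c\,\omega(\varrho(x,y))(1+\bar{V}(x,y))$ is essentially the inequality the paper also extracts. The genuine gap is the step you yourself flag as ``where most of the work lies'': the passage to a dressed concave metric $d$ that the coupled kernel contracts by a uniform factor in one step. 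That step is not carried out, and there is a concrete obstruction to carrying it out in the form you propose. On the decoupling event, which has probability of order $\omega(r)$ when $\varrho(x,y)=r$, the pair is thrown to distance $O(1)$, so any candidate $d=\psi(\varrho)\cdot(\text{Lyapunov weight})$ must satisfy $\psi(r)\gtrsim\omega(r)$; combined with the contraction $\psi(ar)\le q\,\psi(r)$ needed on the synchronous branch, this forces $\omega(a^nr)\lesssim q^n\psi(r)$, i.e.\ geometric decay of $\omega$ along geometric scales. The Dini condition does not give this: for $\omega(t)=(\log(1/t))^{-2}$ one has $\omega(a^nr)\sim n^{-2}$, and the associated $\varphi$ of \eqref{varphi} satisfies $\varphi(\zeta t)/\varphi(t)\to1$ as $t\to0$, so no uniform one-step contraction factor exists for metrics of this shape. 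Near the diagonal the expected distance can genuinely increase in one step.

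The paper circumvents this by never asking for a one-step contraction. It fixes the perturbations $(h_n)$ deterministically, works with the augmented chain on $X^2\times\{0,1\}$ that records whether the synchronous branch $Q$ was used, and argues over the infinite horizon: Theorem \ref{theorem1} gives exponential moments of the hitting time of the small set $K_{\varkappa}$; Theorem \ref{theorem2} shows that from $K_{\varkappa}$ the synchronous branch survives \emph{forever} with probability at least $\tfrac12\bar{\gamma}^{n_0}$ --- this is exactly where the Dini summability enters, since the total mass lost along a contracting trajectory is $\sum_i\omega(a^{i-1}\phi(b))\le\varphi(a^{-1}\phi(b))$, finite and small by \eqref{varphi}; Theorem \ref{theorem3} converts these into exponential moments of the coupling time $\tau$; and Lemma \ref{theorem_new} together with Theorem \ref{theorem4} then yield $\|\Pi^n_{h_1,\ldots,h_n}(x,\cdot)-\Pi^n_{h_1,\ldots,h_n}(y,\cdot)\|_{\mathcal{L}}\le q^nC_5(1+\bar{V}(x,y))$ uniformly in $(h_i)$, after which one integrates over $h_i\sim\nu^{\varepsilon}$. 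To repair your argument you would need to replace the one-step contraction of $d$ by such a coupling-time (or at least multi-step) estimate; as written, the central mechanism of your proof is missing. A minor further point: the paper does not import existence of $\mu_*$ from \cite{hille} but obtains it from the Cauchy property of $(P^n_{\varepsilon}\mu)_{n\in N}$ in the complete space $(M_1(X),\|\cdot\|_{\mathcal{L}})$, and proves $\mu_*\in M_1^1(X)$ by a monotone truncation of $V$.
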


Now, assumption (II) is strengthened to the following condition:
\begin{itemize}
\item[(II')] Let $S:X\times[0,T]\to X$ be a~continuous function which satisfies
\begin{equation}
\varrho(S(x,t),S(y,t))\leq \lambda(x,t)\varrho(x,y)\quad\text{ for }x,y\in X, t\in[0,T],
\end{equation}
where $\lambda:X\times[0,T]\to[0,\infty)$ is a~Borel measurable function such that 
\begin{equation}\label{def:b}
\Lambda:=\sup_{x\in X}\int_0^T\lambda^2(x,t)p(x,t)dt<1.
\end{equation}
\end{itemize}
Note that (II') implies (II), due to the H\"{o}lder inequality, and we obtain that $a\leq \sqrt{\Lambda}<1$. Assuming (II') instead of (II) allows us to show that $\mu_*\in M^2_1(X):=\{\mu\in M_1(X): \int_X\varrho^2(x,\bar{x})\mu(dx)<\infty\}$, which is essential to establish the CLT in the way presented in this paper. It is proven in Lemma \ref{second_moment} that $\mu_*$ is indeed with finite second moment.

Now, choose an arbitrary function $g:X\to R$ which is Lipschitz continuous, bounded and satisfies  $\langle g,\mu_*\rangle =0$. 
Let $(x_i)_{i\in N}$ be the Markov chain with transition probability function $\Pi_{\varepsilon}$ and initial distribution $\mu\in M^2_1(X)$. For every $n\in N$, put 
\begin{align*}
\begin{aligned}
\eta_n^{\mu}:=\frac{g(x_1)+\ldots+g(x_n)}{\sqrt{n}}
\end{aligned}
\end{align*}
and let $\Phi_{\eta_n^{\mu}}$ denote its distribution. 

\begin{thm}\label{CTG}
Let $\mu\in M_1^2(X)$ be with finite second moment and let $\Phi_{\eta_n^{\mu}}$ be the distribution of $\eta_n^{\mu}$, as defined above. Assuming that all conditions (I)-(VI) are fulfilled and (II) is additionally strengthened to (II'). Then $\Phi_{\eta_n^{\mu}}$ converges weakly to the normal distribution, as $n\to\infty$. 
\end{thm}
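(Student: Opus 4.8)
The plan is to establish the CLT first for the stationary chain, i.e.\ for the initial distribution $\mu_*$, via the central limit theorem of Maxwell and Woodroofe \cite{woodr}, and then to pass to an arbitrary $\mu\in M_1^2(X)$ using the coupling constructed in Sections 5--7. Throughout let $U_\varepsilon$ denote the dual operator of $P_\varepsilon$, characterized by $\langle U_\varepsilon f,\nu\rangle=\langle f,P_\varepsilon\nu\rangle$, so that $U_\varepsilon^k g(x)=\langle g,P_\varepsilon^k\delta_x\rangle$ for $x\in X$ and $k\in N$.

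\emph{Stationary case.} For a stationary ergodic Markov chain the Maxwell--Woodroofe criterion requires
\begin{align*}
\sum_{n=1}^\infty n^{-3/2}\,\Big\|\sum_{k=1}^n U_\varepsilon^k g\Big\|_{L^2(\mu_*)}<\infty,
\end{align*}
and, once it holds, yields weak convergence of $\eta_n^{\mu_*}$ to a normal law. To verify it I would use $\langle g,\mu_*\rangle=0$ to write $U_\varepsilon^k g(x)=\langle g,P_\varepsilon^k\delta_x-\mu_*\rangle$, whence $|U_\varepsilon^k g(x)|\le L_g\,\|P_\varepsilon^k\delta_x-\mu_*\|_{\mathcal{L}}$ for the constant $L_g$ making $g/L_g\in\mathcal{L}$. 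Applying Theorem \ref{main} to the initial measure $\delta_x\in M_1^1(X)$ and tracking the dependence of $C(\delta_x)$ on the first moment of $\delta_x$ (which is $\varrho(x,\bar x)$), I expect a bound of the form $|U_\varepsilon^k g(x)|\le \tilde C\,(1+\varrho(x,\bar x))\,q^k$. Squaring, integrating against $\mu_*$ and invoking Lemma \ref{second_moment} then gives $\|U_\varepsilon^k g\|_{L^2(\mu_*)}\le C'q^k$, so the inner norm above is bounded uniformly in $n$ by $C'\sum_{k\ge1}q^k$, and the series converges because $\sum_n n^{-3/2}<\infty$.

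\emph{General initial distribution.} Let $(x_i)$ and $(x_i^*)$ be the chains issued from $\mu$ and from $\mu_*$, realized on one probability space through the coupling of Sections 5--7, and let $\tau$ be the coupling time past which $x_i=x_i^*$; the exponential estimate behind Theorem \ref{main} makes $\tau$ finite almost surely. Since $g$ is bounded, the two normalized sums differ by
\begin{align*}
|\eta_n^{\mu}-\eta_n^{\mu_*}|=\frac1{\sqrt n}\Big|\sum_{i=1}^{n}\big(g(x_i)-g(x_i^*)\big)\Big|\le\frac{2\|g\|_\infty\,\tau}{\sqrt n}\longrightarrow0\quad\text{a.s.}
\end{align*}
Hence $\eta_n^{\mu}-\eta_n^{\mu_*}\to0$ in probability, and Slutsky's theorem forces $\eta_n^{\mu}$ to share the same (normal) weak limit as $\eta_n^{\mu_*}$, completing the proof.

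The step I expect to be the main obstacle is the quantitative part of the stationary case: extracting from the proof of Theorem \ref{main} that $C(\mu)$ grows at most linearly in the first moment of $\mu$, so that the pointwise bound on $U_\varepsilon^k g$ is square-integrable against $\mu_*$. This is exactly where the strengthening (II') and the resulting finite second moment of $\mu_*$ (Lemma \ref{second_moment}) are indispensable; without them the $L^2(\mu_*)$-norms need not even be finite. By contrast, the coupling transfer is soft, relying only on almost-sure finiteness of the coupling time.
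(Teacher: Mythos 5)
Your stationary-case argument is essentially the paper's: both verify the Maxwell--Woodroofe condition by showing that $\langle g,P_\varepsilon^k\delta_x\rangle$ decays like $q^k(1+\varrho(x,\bar x))$ (the paper extracts this from Lemma \ref{theorem_new} and Remark \ref{rem_g} rather than from the constant in Theorem \ref{main}, but the quantitative content is identical), and both then use Lemma \ref{second_moment} to make the bound square-integrable against $\mu_*$. That part is fine.

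The transfer to a general initial distribution, however, contains a genuine gap. The coupling constructed in Sections 5--7 is an \emph{asymptotic} coupling, not an exact one: the coupling time $\tau$ is the first moment after which every step is drawn from $Q^1_{h_i}$, i.e.\ both coordinates are driven by the \emph{same} $t$ and the same $h$, so that $\varrho(x_i,y_i)$ contracts geometrically in expectation --- but the two trajectories never coalesce, since $Q^1_{h_i}$ sends $(x,y)$ to $(T_{h_i}(x,t),T_{h_i}(y,t))$ with $T_{h_i}(x,t)\neq T_{h_i}(y,t)$ whenever $x\neq y$. Hence your identity $g(x_i)=g(x_i^*)$ for $i\geq\tau$ is false, and the bound $|\eta_n^{\mu}-\eta_n^{\mu_*}|\leq 2\|g\|_\infty\tau/\sqrt n$ does not hold. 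The step can be repaired --- and this is exactly what the paper does --- by replacing boundedness of $g$ with its Lipschitz continuity: Remark \ref{rem_g} gives $\int_{X^2}|g(u)-g(v)|(\Pi^*_{X^2}\Pi^*_i\hat C^\infty_{h_1,h_2,\ldots}((x,y),\cdot))(du\times dv)\leq Gq^iC_5(1+\bar V(x,y))$, so the expected difference of the normalized sums is at most $n^{-1/2}\sum_{i=1}^n Gq^iC_5(1+\bar V(x,y))=O(n^{-1/2})$, which after integration against $\mu(dx)\mu_*(dy)$ (finite by the moment assumptions) yields $\|\Phi_{\eta_n^\mu}-\Phi_{\eta_n^*}\|_{\mathcal L}\to 0$; your Slutsky-type conclusion then goes through. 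As written, though, the mechanism you invoke is not available for this coupling.
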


\section{An auxiliary model - basic assumptions}\label{sec:assumptions}

Our aim is to prove exponential rate of convergence for the model given in \cite{hille}, as it is stated in Theorem $\ref{main}$. The idea is to use coupling methods. However, implementing these methods directly to the model given above does not give the expected results. Instead, we fix a sequence of constants $(h_n)_{n\in N}\subset H$, where $h_n\in\bar{B}(0,\varepsilon)$ for all $n\in N$, and consider a~stochastically perturbed dynamical system
\[x_{n+1}=T_{h_{n+1}}(x_n,t_{n+1}):=S(x_n,t_{n+1})+h_{n+1},\quad n=0,1,2,\ldots\]
Note that
\begin{align*}
\begin{aligned}
x_{n+1}&=T_{h_{n+1}}\Big(T_{h_{n}}(x_{n-1},t_n),t_{n+1}\Big).
\end{aligned}
\end{align*}
For abbreviation, we introduce the symbol
\begin{align}\label{def:tilde_x}
\tilde{x}_{n+1}^{x_0}:=T_{h_{n+1}}\Big(T_{h_n}(\ldots T_{h_2}(T_{h_1}(x_0,t_1),t_2)\ldots),t_{n+1}\Big),
\end{align}
where the upper index $x_0$ indicates the point from which the iteration begins. 

Let us further assume that, for every $A\in B_X$,
\[\text{Prob}(x_{n+1}\in A):=\mu_{n+1}(A) \quad\text{and}\quad P_{h_{n+1}}\mu_n=\mu_{n+1}\text{,}\]
where, for arbitrary $h\in\bar{B}(0,\varepsilon)$,
\[(P_{h}\mu)(A):=\int_X\left[\int_0^T 1_A(T_{h}(x,t))p(x,t)dt\right]\mu(dx)\text{.}
\]

We maintain all previous assumptions (I)-(VI). Now, for every $h\in\bar{B}(0,\varepsilon)$, we consider an operator
\[
T_h(x,t):=S(x,t)+h.
\]
Note that, as a consequence of assumption (II), $T_h$ is continuous and satisfies the same Lipschitz type inequality as operator $S$ satisfies.

We also set 
\begin{align}\label{def:c}
\begin{aligned}
c:=\sup_{t\in[0,T]}\varrho\Big(S(\bar{x},t),\bar{x}\Big)+\varepsilon_*>\sup_{t\in[0,T], i\in N}\varrho\Big(T_{h_i}(\bar{x},t),\bar{x}\Big).
\end{aligned}
\end{align}
Obviously, $c$ is finite, because of assumption (III).

\section{Measures on the pathspace and coupling}\label{sec:coupling}

Set $x\in X$ and $(h_n)_{n\in N}\subset H$, $h_n\in\bar{B}(0,\varepsilon)$ for all $n\in N$. One-dimensional distributions $\Pi^n_{h_1,\ldots,h_n}(x,\cdot)$, ${n\in N}$, are defined by induction on $n$
\begin{align}\label{constr:n+1}
\begin{aligned}
\Pi^0(x,A)=\delta_x(A) \\
\Pi^1_{h}(x,A)=\int_0^T1_A(T_{h}(x,t))p(x,t)dt \\
\vdots \\ 
\Pi^n_{h_1,\ldots,h_n}(x,A)=\int_X \Pi^1_{h_n}(y,A)\Pi^{n-1}_{h_1,\ldots,h_{n-1}}(x,dy),
\end{aligned}
\end{align}
where $A\in B_X$. 
We easily obtain two-dimensional and higher-dimensional distributions. If we assume that, for $x\in X$, $\Pi^{1,\ldots,n}_{h_1,\ldots,h_n}(x,\cdot)$ is a measure on $X^n$, generated by a sequence $(\Pi^1_{h_i}(x,\cdot))_{i=1}^n$, then 
\begin{align}\label{constr:ndim}
\begin{aligned}
\Pi^{1,\ldots,n+1}_{h_1,\ldots,h_{n+1}}(x,A\times B)=\int_A\Pi^1_{h_{n+1}}(z_n,B)\Pi^{1,\ldots,n}_{h_1,\ldots,h_n}(x,dz),
\end{aligned}
\end{align}
where $z=(z_1,\ldots,z_n)$ and $A\in B_{X^n}$, $B\in B_X$, is a measure on $X^{n+1}$. 
Note that $\Pi^1_{h_1}(x,\cdot),\ldots,\Pi^n_{h_1,\ldots,h_n}(x,\cdot)$, given by $(\ref{constr:n+1})$, are marginal distributions of $\Pi^{1,\ldots,n}_{h_1,\ldots,h_n}(x,\cdot)$, for every $x\in X$. 
Finally, we obtain a~family $\{\Pi_{h_1,h_2,\ldots}^{\infty}(x,\cdot):x\in X\}$ of sub-probability measures on $X^{\infty}$. This construction is motivated by \cite{hairer}. The existence of measures $\Pi_{h_1,h_2,\ldots}^{\infty}(x,\cdot)$ is established by the Kolmogorov theorem. More precisely, for any $x\in X$, there exists some probability space on which we can define a~stochastic process $\xi^x$ with distribution $\phi_{\xi^x}$ such that
\[\phi_{\xi^x}(B)=\text{Prob}(\xi^x\in B):=\Pi_{h_1,h_2,\ldots}^{\infty}(x,B)\quad\text{ for $B\in B_{X^{\infty}}$.}\] 
Therefore, $\Pi_{h_1,h_2,\ldots}^{\infty}(x,\cdot)$ is the distribution of the non-homogeneous Markov chain $\xi^x$ on $X^{\infty}$ with sequence of transition probability functions $(\Pi^1_{h_i})_{i\in N}$  and $\phi_{\xi^x_0}=\delta_x$, for $x\in X$. If an initial distribution is given by any $\mu\in M_{\text{fin}}(X)$, not necessarily by $\delta_x$, we define
\[(P_{h_1,h_2,\ldots}^{\infty}{\mu})(B)=\int_X \Pi_{h_1,h_2,\ldots}^{\infty}(x,B)\mu(dx)\quad\text{ for $B\in B_{X^{\infty}}$}.\]

\begin{definition}
Let a family of probability measures $(\{\Pi^1_{h_i}(x,\cdot): x\in X\})_{i\in N}$ be given. For every $i\in N$, we can set another family of probability measures $\{C^1_{h_i}((x,y),\cdot):x,y\in X\}$ on $X^2$ such that
\begin{itemize}
\item $C^1_{h_i}((x,y),A\times X)=\Pi^1_{h_i}(x,A)$ \;for $A\in B_X$,
\item $C^1_{h_i}((x,y),X\times B)=\Pi^1_{h_i}(y,B)$ \;for $B\in B_X$,
\end{itemize}
where $x,y\in X$. For every $i\in N$, $\{C^1_{h_i}((x,y),\cdot):x,y\in X\}$ is called coupling.
\end{definition}

\section{Iterated function systems}\label{sec:ifs}

We consider a continuous function $S:X\times[0,T]\to X$ and a sequence of continuous mappings given by $(T_{h_i})_{i\in N}$ with sequence of constants $(h_i)_{i\in N}$ established. We assume that $p:X\times[0,T]\to[0,\infty)$ is a  non-negative and normalized function.  
For each $A\in B_X$, we build a~sequence of transition operators, as we did in $(\ref{constr:n+1})$.

Let $n\in N$. Note that, for arbitrary $A\in B_X$, $\Pi^n_{h_1,\ldots,h_n}(\cdot,A):X\to R$ is measurable by definition. 
Furthermore, $\Pi^n_{h_1,\ldots,h_n}(x,\cdot):B_X\to R$ is a probability measure, for every $x\in X$. Hence, $\Pi^n_{h_1\ldots,h_n}$ is a transition probability function on the $n$-th marginal. 
Thanks to these properties (see Section 1.1, \cite{z}), for every $n\in N$ and a sequence of constants $(h_i)_{i\in N}$ fixed, there exists a~unique regular Markov operator $P^n_{h_1,\ldots,h_n}$, for which $\Pi^n_{h_1,\ldots,h_n}$ is a transition probability function, and it is given by the formula
\[(P^n_{h_1,\ldots,h_n}\mu)(A)=\int_X\Pi^n_{h_1,\ldots,h_n}(x,A)\mu(dx),\]
where $A\in B_X$, $\mu\in M_{1}(X)$. Moreover, a dual operator $U^n_{h_1,\ldots,h_n}:B(X)\to B(X)$ to $P^n_{h_1,\ldots,h_n}$ is defined as follows
\[(U^n_{h_1\ldots,h_n}f)(x)=\int_Xf(y)\Pi^n_{h_1,\ldots,h_n}(x,dy).\]

\begin{remark}
According to assumptions (II) and (IV), one may check, although through some tedious computations, that, for every $n\in N$ and a sequence of constants $(h_i)_{i\in N}$ fixed,
\begin{align*}
\begin{aligned}
\|\Pi^n_{h_1,\ldots,h_n}(x,\cdot)-\Pi^n_{h_1,\ldots,h_n}(y,\cdot)\|_{\mathcal{L}}\leq a^n\varrho(x,y)+\varphi(\varrho(x,y)),
\end{aligned}
\end{align*}
where $\varphi$ is given by (\ref{varphi}). This indicates weak continuity of the map $X\ni x\mapsto\Pi^n_{h_1,\ldots,h_n}(x,\cdot)\in M_1(X)$.
Now, this property, together with the fact that $P^n_{h_1,\ldots,h_n}$ is a regular Markov operator, impies that $P^n_{h_1,\ldots,h_n}$ is even Feller (see Chapter 6, \cite{tweedie}).

However, these estimates do not give us any proper result about stability of the model. That is why we still need to use some coupling methods.
\end{remark}

Repeating the construction from the previous section, we obtain $P_{h_1,h_2,\ldots}^{\infty}\mu$ for $\mu\in M_{1}(X)$. Obviously, for every $n\in N$, $P^n_{h_1,\ldots,h_n}{\mu}$ is the $n$-th marginal of $P_{h_1,h_2,\ldots}^{\infty}\mu$.

Fix $\bar{x}\in X$ for which assumption (III) holds. We define $V:X\to[0,\infty)$ to be
\[V(x)=\varrho(x,\bar{x}).\]

\begin{lemma}
For every $n\in N$ and a sequence of constants $(h_i)_{i\in N}$ fixed, if $\mu\in M^1_1(X)$, then $P^n_{h_1,\ldots,h_n}\mu\in M^1_1(X)$. Moreover, 
\[\langle V,P^n_{h_1,\ldots,h_n}\mu\rangle \leq a^n\langle V,\mu\rangle +\frac{1}{1-a}c,\]
where $c$ does not depend on the sequence $(h_i)_{i\in N}$.
\end{lemma}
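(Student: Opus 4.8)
The plan is to reduce the whole statement to a single one-step drift estimate for the operator $P^1_h$ and then to iterate it along the semigroup decomposition $P^n_{h_1,\ldots,h_n}=P^1_{h_n}\circ P^{n-1}_{h_1,\ldots,h_{n-1}}$, which follows immediately from the recursive definition (\ref{constr:n+1}): integrating $\Pi^n_{h_1,\ldots,h_n}(x,A)=\int_X\Pi^1_{h_n}(y,A)\,\Pi^{n-1}_{h_1,\ldots,h_{n-1}}(x,dy)$ against $\mu$ identifies $P^n_{h_1,\ldots,h_n}\mu$ with $P^1_{h_n}$ applied to $P^{n-1}_{h_1,\ldots,h_{n-1}}\mu$.

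First I would establish the one-step bound $\langle V,P^1_h\mu\rangle\le a\langle V,\mu\rangle+c$, valid for every $h\in\bar B(0,\varepsilon)$ and every $\mu\in M^1_1(X)$. Unwinding the definitions, $\langle V,P^1_h\mu\rangle=\int_X\int_0^T\varrho(T_h(x,t),\bar x)\,p(x,t)\,dt\,\mu(dx)$, so it suffices to control the inner integrand pointwise in $x$. By the triangle inequality, $\varrho(T_h(x,t),\bar x)\le\varrho(T_h(x,t),T_h(\bar x,t))+\varrho(T_h(\bar x,t),\bar x)$. For the first term I use that $T_h$ translates both arguments by the same vector $h$, so $\varrho(T_h(x,t),T_h(\bar x,t))=\varrho(S(x,t),S(\bar x,t))\le\lambda(x,t)V(x)$ by the Lipschitz bound (II); for the second term the definition (\ref{def:c}) of $c$ gives $\varrho(T_h(\bar x,t),\bar x)\le c$ uniformly in $t$ and in the choice of $h$. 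Integrating the resulting bound $\lambda(x,t)V(x)+c$ against $p(x,t)\,dt$ and invoking the normalization $\int_0^Tp(x,t)\,dt=1$ together with the definition (\ref{def:a}) of $a$ yields $\int_0^T\varrho(T_h(x,t),\bar x)p(x,t)\,dt\le aV(x)+c$; integrating once more against $\mu$ (with $\mu(X)=1$) produces the claimed one-step inequality.

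With this in hand the iteration is routine: applying the one-step bound with $h=h_n$ to the measure $P^{n-1}_{h_1,\ldots,h_{n-1}}\mu$ and inducting on $n$ gives $\langle V,P^n_{h_1,\ldots,h_n}\mu\rangle\le a^n\langle V,\mu\rangle+c\sum_{k=0}^{n-1}a^k\le a^n\langle V,\mu\rangle+\frac{1}{1-a}c$, where the geometric series is summable precisely because $a<1$. Since $\langle V,\mu\rangle<\infty$ whenever $\mu\in M^1_1(X)$, the right-hand side is finite; as each $P^1_{h_i}$ is a Markov operator preserving total mass, $P^n_{h_1,\ldots,h_n}\mu$ is again a probability measure, and finiteness of its first moment is exactly $P^n_{h_1,\ldots,h_n}\mu\in M^1_1(X)$. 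Finally, $c$ depends only on $\varepsilon_*$ and on $S(\bar x,\cdot)$ through (\ref{def:c}), hence not on the sequence $(h_i)$.

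I do not anticipate a genuine obstacle; the only step needing care is the triangle-inequality split, where one must exploit that the perturbation $h$ cancels so that the contraction factor $\lambda(x,t)$—rather than any quantity involving $\varepsilon$—governs the decay. The strict inequality $a<1$ in (\ref{def:a}) is what converts the drift inequality into a bound uniform in both $n$ and the sequence $(h_i)$ via the convergent geometric sum.
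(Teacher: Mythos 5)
Your proposal is correct and rests on the same ingredients as the paper's proof: the triangle-inequality split of $\varrho(\cdot,\bar x)$ into a part contracted by $\lambda(x,t)$ (using that the translation by $h$ cancels) and a part bounded by $c$ from (\ref{def:c}), followed by a geometric series controlled by $a<1$. The only cosmetic difference is that you package this as a one-step drift inequality iterated by induction, whereas the paper carries out a single $n$-fold integral with a telescoping sum along the trajectory started at $\bar x$; these are the same estimate.
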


\begin{proof}[Proof]
Recall that $a<1$ and $c$ are given by (\ref{def:a}) i (\ref{def:c}), respectively. The state $\tilde{x}^x_n$ is of the form (\ref{def:tilde_x}).
Following $(\ref{Lip})$, we obtain
\begin{align*}
\begin{aligned}
&\langle V,P^n_{h_1,\ldots,h_n}\mu \rangle  \\
&=\int_X\Big[\int_0^T\ldots\int_0^T\varrho(\tilde{x}^x_n,\bar{x})p(\tilde{x}^x_{n-1},t_n)p(\tilde{x}^x_{n-2},t_{n-1})\ldots p(x,t_1)dt_n\ldots dt_1\Big]\mu(dx)\\
&\leq\int_X\int_0^T\ldots\int_0^T\Big[\varrho(\tilde{x}^x_n,\tilde{x}^{\bar{x}}_n)+\varrho(\tilde{x}^{\bar{x}}_n,\bar{x})\Big]
p(\tilde{x}^x_{n-1},t_n)p(\tilde{x}^x_{n-2},t_{n-1})\ldots p(x,t_1)dt_n\ldots dt_1\mu(dx)\\
&\leq \int_X\int_0^T\ldots\int_0^T \Big[ \lambda(\tilde{x}^x_{n-1},t_n)\lambda(\tilde{x}^x_{n-2},t_{n-1})\ldots\lambda(x,t_1)\varrho(x,\bar{x})+\varrho(\tilde{x}^x_{n},\tilde{x}^{\bar{x}}_{n-1})+\ldots+\varrho(\tilde{x}^{\bar{x}}_{1},\bar{x})\Big]\\
&\qquad p(\tilde{x}^x_{n-1},t_n)p(\tilde{x}^x_{n-2},t_{n-1})\ldots p(x,t_1)dt_n\ldots dt_1\mu(dx)\\
&\leq \int_X a^n\varrho(x,\bar{x})+c(a^n+\ldots+1)\mu(dx)\\
&\leq a^n\langle V,\mu\rangle +\frac{c}{1-a},
\end{aligned}
\end{align*}
which completes the proof.
\end{proof}

Fix probability measures $\mu,\nu\in M_1^1(X)$ and Borel sets $A,B\in B_X$. We consider $b\in M_{\text{fin}}(X^2)$ such that
\[b(A\times X)=\mu(A)\text{,}\qquad b(X\times B)=\nu(B)\]
and $b^n_{h_1,\ldots,h_n}\in M_{\text{fin}}(X^2)$ such that, for every $n\in N$,
\[b^n_{h_1,\ldots,h_n}(A\times X)=(P^n_{h_1,\ldots,h_n}\mu)(A)\text{,}\qquad b^n_{h_1,\ldots,h_n}(X\times B)=(P^n_{h_1,\ldots,h_n}\nu)(B)\text{.}\]
Furthermore, we define $\bar{V}:X^2\to[0,\infty)$
\[\bar{V}(x,y)=V(x)+V(y)\quad\text{for $x,y\in X$.}\]
Note that, for every $n\in N$,
\begin{align}\label{prop:barV}
\langle \bar{V},b^n_{h_1,\ldots,h_n}\rangle \:\leq \:a\langle \bar{V},b^{n-1}_{h_1,\ldots,h_{n-1}}\rangle +2c\:\leq\:a^n\langle \bar{V},b\rangle +\frac{2}{1-a}c.
\end{align}
For measures $b\in M_{\text{fin}}^1(X^2)$ finite on $X^2$ and with finite first moment, we define the~linear functional
\[\phi(b)=\int_{X^2}\varrho(x,y)b(dx\times dy).\]
Following the above definitions, we easily obtain 
\begin{align}\label{prop:phi}
\phi(b)\:\leq\:\langle \bar{V},b\rangle .
\end{align}

\section{Coupling for iterated function systems}\label{sec:coupling_ifs}

On $X^2$ we define the transition sub-probability functions such that, for $A,B\in B_{X}$,
\begin{equation}\label{def:Qxy}
Q^1_{h_i}((x,y),A\times B)=\int_0^T \min\{p(x,t),p(y,t)\}\delta_{(T_{h_i}(x,t),T_{h_i}(y,t))}(A\times B)dt,\quad i\in N,
\end{equation}
and
\begin{equation}\label{def:Qxy^n}
Q^n_{h_1,\ldots,h_n}((x,y),A\times B)=\int_{X^2}Q^1_{h_n}((u,v),A\times B)Q^{n-1}_{h_1,\ldots,h_{n-1}}((x,y),du\times dv),\quad n\in N.
\end{equation}
Measures generated by the transition functions defined above are, by convention, denoted with the same letter. Every time, the context should indicate what we mean. 
It is easy to check that, for every $i\in N$,
\begin{align*}
\begin{aligned}
Q^1_{h_i}((x,y),A\times X)&\leq\int_0^Tp(x,t)\delta_{T_{h_i}(x,t)}(A)dt
&=\int_0^T1_A(T_{h_i}(x,t))p(x,t)dt
&=\Pi^1_{h_i}(x,A)
\end{aligned}
\end{align*}
and analogously 
$Q^1_{h_i}((x,y),X\times B)\leq \Pi^1_{h_i}(y,B)$. 
Similarly, for $n\in N$,
\[Q^n_{h_1,\ldots,h_n}((x,y),A\times X)\leq\Pi^n_{h_1,\ldots,h_n}(x,A)\text{,}\qquad Q^n_{h_1,\ldots,h_n}((x,y),X\times B)\leq\Pi^n_{h_1,\ldots,h_n}(y,B).\]
For $b\in M_{\text{fin}}(X^2)$, let $Q^n_{h_1,\ldots,h_n}b$ denote the measure
\begin{equation}\label{def:Qb}
(Q^n_{h_1,\ldots,h_n}b)(A\times B)=\int_{X^2}Q^n_{h_1,\ldots,h_n}((x,y),A\times B)b(dx\times dy)\quad\text{ for }\:A,B\in B_{X}, n\in N\text{.}
\end{equation}
Note that, for every $A,B\in B_{X}$ and $n\in N$, we obtain
\begin{align}\label{Q1Qnb}
\begin{aligned}
(Q^{n+1}_{h_1,\ldots,h_{n+1}}b)(A\times B)
&=\int_{X^2}Q_{h_1,\ldots,h_{n+1}}^{n+1}((x,y),A\times B)b(dx\times dy)\\
&=\int_{X^2}\int_{X^2}Q^1_{h_{n+1}}((u,v),A\times B)Q^n_{h_1,\ldots,h_n}((x,y),du\times dv)b(dx\times dy)\\
&=\int_{X^2}Q^1_{h_{n+1}}((u,v),A\times B)(Q^n_{h_1,\ldots,h_n}b)(du\times dv)
=(Q^1_{h_{n+1}}(Q_{h_1,\ldots,h_n}^nb))(A\times B).
\end{aligned}
\end{align}
Again, following $(\ref{constr:n+1})$ and $(\ref{constr:ndim})$, we are able to construct measures on products and, as a consequence, a measure $Q_{h_1,h_2,\ldots}^{\infty}b$ on~$X^{\infty}$, for every $b\in M_{\text{fin}}(X^2)$. 
Now, we check that, for $n\in N$ and $b\in M^1_{\text{fin}}(X^2)$,
\begin{equation}\label{prop:phiQb}
\phi(Q^n_{h_1,\ldots,h_n}b)\leq a^n\phi(b).
\end{equation}
Let us observe that
\begin{align*}
\begin{aligned}
\phi(Q^n_{h_1,\ldots,h_n}b)
&=\int_{X^2}\int_{X^2}\varrho(u,v)Q^n_{h_1,\ldots,h_n}((x,y),du\times dv)b(dx\times dy)\\
&=\int_{X^2}\int_{X^2}\int_0^T\int_{X^2}\varrho(u,v)\min\{p(\bar{u},t),p(\bar{v},t)\}\delta_{(T_{h_n}(\bar{u},t),T_{h_n}(\bar{v},t))}(du\times dv)dt\\
&\qquad\qquad\qquad Q^{n-1}_{h_1,\ldots,h_{n-1}}((x,y),d\bar{u}\times d\bar{v})b(dx\times dy)\\
&\leq\int_{X^2}\int_{X^2}\int_0^T\varrho(T_{h_n}(\bar{u},t),T_{h_n}(\bar{v},t))p(\bar{u},t)dtQ^{n-1}_{h_1,\ldots,h_{n-1}}((x,y),d\bar{u}\times d\bar{v})b(dx\times dy)\\
&\leq\int_{X^2}\int_{X^2}\int_0^T\varrho(\bar{u},\bar{v})\lambda(\bar{u},t)p(\bar{u},t)dtQ^{n-1}_{h_1,\ldots,h_{n-1}}((x,y),d\bar{u}\times d\bar{v})b(dx\times dy)\\
&\leq a\int_{X^2}\int_{X^2}\varrho(\bar{u},\bar{v})Q^{n-1}_{h_1,\ldots,h_{n-1}}((x,y),d\bar{u}\times d\bar{v})b(dx\times dy)\leq\ldots\leq a^n\phi(b).
\end{aligned}
\end{align*}

For every $i\in N$, we can find a measure $R^1_{h_i}((x,y),\cdot)$ such that the sum of $Q^1_{h_i}((x,y),\cdot)$ and $R^1_{h_i}((x,y),\cdot)$ gives a new coupling measure $C^1_{h_i}((x,y),\cdot)$.

\begin{lemma}\label{lemma1}
Fix $i\in N$. There exists the family $\{R^1_{h_i}((x,y),\cdot):x,y\in X\}$ of measures on~$X^2$ such that we can define
\[C^1_{h_i}((x,y),\cdot)=Q^1_{h_i}((x,y),\cdot)+R^1_{h_i}((x,y),\cdot)\quad\text{ for $x,y\in X$}\]
and, moreover, 
\begin{itemize}
\item[(i)] the mapping $(x,y)\mapsto R^1_{h_i}((x,y),A\times B)$ is measurable for every $A,B\in B_{X}$;
\item[(ii)] measures $R^1_{h_i}((x,y),\cdot)$ are non-negative for $x,y\in X$;
\item[(iii)] measures $C^1_{h_i}((x,y),\cdot)$ are probabilistic for every $x,y\in X$ and so $\{C^1_{h_i}((x,y),\cdot):x,y\in X\}$ is a~transition probability function on $X^2$;
\item[(iv)] for every $A,B\in B_X$ and $x,y\in X$, we get $C^1_{h_i}((x,y),A\times X)=\Pi^1_{h_i}(x,A)$ and $C^1_{h_i}((x,y),X\times B)=\Pi^1_{h_i}(y,B)$.
\end{itemize}
\end{lemma}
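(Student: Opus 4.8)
The plan is to build $R^1_{h_i}$ as the properly normalized product of the two residual marginals that remain after subtracting the common part $Q^1_{h_i}$ from the full kernels $\Pi^1_{h_i}(x,\cdot)$ and $\Pi^1_{h_i}(y,\cdot)$. This is the maximal-coupling construction: $Q^1_{h_i}$ already captures the overlap of the two laws through the minimum of the densities, and $R^1_{h_i}$ must supply the leftover mass in each coordinate in an independent fashion.

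First I would read off the marginals of the sub-probability measure $Q^1_{h_i}((x,y),\cdot)$. From (\ref{def:Qxy}) the first marginal is $\int_0^T 1_A(T_{h_i}(x,t))\min\{p(x,t),p(y,t)\}\,dt$, and symmetrically for the second. Subtracting these from $\Pi^1_{h_i}(x,\cdot)$ and $\Pi^1_{h_i}(y,\cdot)$ leaves the residual marginals
\[\mu^{x,y}(A):=\int_0^T 1_A(T_{h_i}(x,t))(p(x,t)-p(y,t))^+\,dt,\qquad \nu^{x,y}(B):=\int_0^T 1_B(T_{h_i}(y,t))(p(y,t)-p(x,t))^+\,dt,\]
where $(s)^+:=\max\{s,0\}$. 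Since $p(x,\cdot)$ and $p(y,\cdot)$ are both normalized, a short computation gives that each residual marginal has the \emph{same} total mass $\kappa(x,y):=1-\int_0^T\min\{p(x,t),p(y,t)\}\,dt\in[0,1]$; indeed $\mu^{x,y}(X)=\int_0^T p(x,t)\,dt-\int_0^T\min\{p(x,t),p(y,t)\}\,dt=\kappa(x,y)$, and likewise for $\nu^{x,y}$.

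Next I would define $R^1_{h_i}$. On the set $\{\kappa(x,y)>0\}$ put
\[R^1_{h_i}((x,y),A\times B):=\frac{1}{\kappa(x,y)}\,\mu^{x,y}(A)\,\nu^{x,y}(B),\]
the independent coupling of the two residual marginals, and set $R^1_{h_i}((x,y),\cdot):=0$ when $\kappa(x,y)=0$ (in which case $\mu^{x,y}$ and $\nu^{x,y}$ both vanish). Property (ii) is then immediate from $\mu^{x,y},\nu^{x,y}\geq 0$. For (iii) and (iv) I would compute the marginals of $C^1_{h_i}=Q^1_{h_i}+R^1_{h_i}$: using $\nu^{x,y}(X)=\kappa(x,y)$ one gets $R^1_{h_i}((x,y),A\times X)=\mu^{x,y}(A)$, hence $C^1_{h_i}((x,y),A\times X)=Q^1_{h_i}((x,y),A\times X)+\mu^{x,y}(A)=\Pi^1_{h_i}(x,A)$, and symmetrically $C^1_{h_i}((x,y),X\times B)=\Pi^1_{h_i}(y,B)$, giving (iv). Taking $A=X$ yields total mass $\Pi^1_{h_i}(x,X)=1$, so each $C^1_{h_i}((x,y),\cdot)$ is a probability measure and the family is a transition probability function, which is (iii).

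The main obstacle is (i), the joint measurability of $(x,y)\mapsto R^1_{h_i}((x,y),A\times B)$. I would first argue that $(x,y)\mapsto\mu^{x,y}(A)$ and $(x,y)\mapsto\nu^{x,y}(B)$ are measurable: the integrand $(x,y,t)\mapsto 1_A(T_{h_i}(x,t))(p(x,t)-p(y,t))^+$ is jointly measurable (by continuity of $T_{h_i}$, measurability of $p$, and Borel measurability of $A$), so integrating out $t$ preserves measurability in $(x,y)$ by Tonelli; the same argument gives measurability of $(x,y)\mapsto\kappa(x,y)$. The only delicate point is the factor $1/\kappa(x,y)$ together with the case split at $\kappa(x,y)=0$: because $R^1_{h_i}$ is defined to equal $0$ exactly on the measurable set $\{\kappa=0\}$ and to equal a ratio of measurable functions on its complement, the resulting map is measurable. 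The remaining bookkeeping is to confirm that, for each fixed $(x,y)$, $R^1_{h_i}((x,y),\cdot)$ is a genuine (countably additive) finite Borel measure on $X^2$ — which holds since it is a scalar multiple of the product of the finite Borel measures $\mu^{x,y}$ and $\nu^{x,y}$ — and this is where most of the routine detail lies.
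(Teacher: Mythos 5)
Your construction is exactly the paper's: the paper defines $R^1_{h_i}((x,y),A\times B)$ as $(1-Q^1_{h_i}((x,y),X^2))^{-1}\bigl(\Pi^1_{h_i}(x,A)-Q^1_{h_i}((x,y),A\times X)\bigr)\bigl(\Pi^1_{h_i}(y,B)-Q^1_{h_i}((x,y),X\times B)\bigr)$ with the zero convention when $Q^1_{h_i}((x,y),X^2)=1$, which is precisely your normalized product of residual marginals since $\kappa(x,y)=1-Q^1_{h_i}((x,y),X^2)$. Your write-up is correct and in fact supplies more detail (the equal-mass computation and the measurability argument) than the paper, which simply asserts that the formula has properties (i)--(iv).
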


\begin{proof}[Proof]
Fix $A,B\in B_{X}$. Let
\begin{align*}
\begin{aligned}
&R^1_{h_i}((x,y),A\times B)\\
&\qquad =
(1-Q^1_{h_i}((x,y),X^2))^{-1}(\Pi^1_{h_i}(x,A)-Q^1_{h_i}((x,y),A\times X))(\Pi^1_{h_i}(y,B)-Q^1_{h_i}((x,y),X\times B))
\end{aligned}
\end{align*}
if $Q^1_{h_i}((x,y),X^2)<1$ and 
$R^1_{h_i}((x,y),A\times B)=0$ if $Q^1_{h_i}((x,y),X^2)=1$. 
Obviously, the formula may be extended to the measure. 
The mapping has all desirable properties $(i)-(iv)$.
\end{proof}

Lemma $\ref{lemma1}$ shows that, for every $i\in N$, we may construct the coupling $\{C^1_{h_i}((x,y),\cdot):x,y\in X\}$ for $\{\Pi^1_{h_i}(x,\cdot):x\in X\}$ such that $Q^1_{h_i}((x,y),\cdot)\leq C^1_{h_i}((x,y),\cdot)$, whereas measures $R^1_{h_i}((x,y),\cdot)$ are non-negative. Following the rules given in $(\ref{constr:n+1})$, $(\ref{constr:ndim})$, as well as the whole construction from Section $\ref{sec:coupling}$, we easily obtain the family of probability measures $\{C^{\infty}_{h_1,h_2\ldots}((x,y),\cdot):x,y\in X\}$ on $(X^2)^{\infty}$ with marginals $\Pi_{h_1,h_2,\ldots}^{\infty}(x,\cdot)$ and $\Pi_{h_1,h_2,\ldots}^{\infty}(y,\cdot)$. This construction appears in \cite{hairer}. 
Note that, for every $n\in N$ and $x,y\in X$, $C^n_{h_1,\ldots,h_n}((x,y),\cdot)$, constructed as in $(\ref{constr:n+1})$, is the $n$-th marginal of $C^{\infty}_{h_1,h_2\ldots}((x,y),\cdot)$. Additionally, $\{C^n_{h_1,\ldots,h_n}((x,y),\cdot):x,y\in X\}$ fulfills the role of coupling for $\{\Pi^n_{h_1,\ldots,h_n}(x,\cdot):x\in X\}$. Indeed, for $A\in B_X$,
\begin{align*}
\begin{aligned}
C^n_{h_1,\ldots,h_n}((x,y),A\times X)&=\int_{X^2}C^1_{h_n}((u,v),A\times X)C^{n-1}_{h_1,\ldots,h_{n-1}}((x,y),du\times dv)\\
&=\int_{X^2}\Pi^1_{h_n}(u,A)C^{n-1}_{h_1,\ldots,h_{n-1}}((x,y),du\times dv)=\ldots=\Pi^n_{h_1,\ldots,h_n}(x,A)
\end{aligned}
\end{align*}
and, similarly, $C^n_{h_1,\ldots,h_n}((x,y),X\times B)=\Pi^n_{h_1,\ldots,h_n}(y,B)$.

Fix $(x_0,y_0)\in X^2$ and $(h_n)_{n\in N}\subset[0,\varepsilon)$. The sequence of transition probability functions $\Big(\{C^n_{h_1,\ldots,h_n}((x,y),\cdot):x,y\in X\}\Big)_{n\in N}$ defines the non-homogenous Markov chain $\Psi$ on $X^2$ with starting point $(x_0,y_0)$, while the sequence of transition probability functions $\Big(\{\hat{C}^n_{h_1,\ldots,h_n}((x,y,\theta),\cdot):x,y\in X, \theta\in\{0,1\}\}\Big)_{n\in N}$ defines the Markov chain $\hat{\Psi}$ on the augmented space $X^2\times\{0,1\}$ with initial distribution $\hat{C}^0((x_0,y_0),\cdot)=\delta_{(x_0,y_0,1)}(\cdot)$. If $\hat{\Psi}_n=(x,y,i)$,
where $x,y\in X$, $i\in\{0,1\}$, then
\[\text{Prob}(\hat{\Psi}_{n+1}\in A\times B\times\{1\}\:|\:\hat{\Psi}_n=(x,y,i),i\in\{0,1\})=Q^n_{h_1,\ldots,h_n}((x,y),A\times B),\]
\[\text{Prob}(\hat{\Psi}_{n+1}\in A\times B\times\{0\}\:|\:\hat{\Psi}_n=(x,y,i),i\in\{0,1\})=R^n_{h_1,\ldots,h_n}((x,y),A\times B),\]
where $A,B\in B_X$. 
Once again, we refer to $(\ref{constr:n+1})$, $(\ref{constr:ndim})$ and the Kolmogorov theorem to obtain the measure $\hat{C}^{\infty}_{h_1,h_2,\ldots}((x_0,y_0),\cdot)$ on $(X^2\times\{0,1\})^{\infty}$ which is associated with the Markov chain $\hat{\Psi}$.

From now on, we assume that processes $\Psi$ and $\hat{\Psi}$ taking values in $X^2$ and $X^2\times\{0,1\}$, respectively, are defined on $(\Omega, F, \mathbf{P})$. The expected value of measures $C^{\infty}_{h_1,h_2,\ldots}((x_0,y_0),\cdot)$, $\hat{C}^{\infty}_{h_1,h_2,\ldots}((x_0,y_0),\cdot)$ is denoted by $E_{x_0,y_0}$.

\section{Auxiliary theorems}

Recall that $a$ is given by $(\ref{def:a})$. Fix $\varkappa\in(0,1-a)$. Set
\[K_{\varkappa}=\{(x,y)\in X^2: \: \bar{V}(x,y)<\varkappa^{-1}2c\},\]
where $c$ is given by $(\ref{def:c})$. Let $d:(X^2)^{\infty}\to N$ denote the time of the first visit in $K_{\varkappa}$, i.e.
\[d((x_n,y_n)_{n\in N})=\inf\{n\in N:\: (x_n,y_n)\in K_{\varkappa}\}.\]
As a convention, we put $d((x_n,y_n)_{n\in N})=\infty$, if there is no $n\in N$ such that $(x_n,y_n)\in K_{\varkappa}$.

\begin{thm}\label{theorem1}
For every $\zeta\in(0,1)$ there exist positive constants $C_1,C_2$ such that
\[E_{x_0,y_0}[(a+\varkappa)^{-\zeta d}]\leq C_1\bar{V}(x_0,y_0)+C_2.\]
\end{thm}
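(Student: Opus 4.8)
The plan is to recognise the statement as a classical consequence of a one-step geometric drift condition for $\bar{V}$ along the coupled chain $\Psi$, and to turn the associated strict contraction off $K_\varkappa$ into a geometric tail bound for the first-entry time $d$. Writing $\Psi_n=(x,y)$ for a generic state and using that $C^1_{h_{n+1}}((x,y),\cdot)$ has marginals $\Pi^1_{h_{n+1}}(x,\cdot)$ and $\Pi^1_{h_{n+1}}(y,\cdot)$, together with $\bar{V}(u,v)=V(u)+V(v)$, the conditional expectation splits as
\[
E_{x_0,y_0}[\bar{V}(\Psi_{n+1})\mid\mathcal{F}_n]=\langle V,\Pi^1_{h_{n+1}}(x,\cdot)\rangle+\langle V,\Pi^1_{h_{n+1}}(y,\cdot)\rangle,
\]
where $\mathcal{F}_n$ is the natural filtration of $\Psi$. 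The one-step instance of the Lemma of Section~\ref{sec:ifs} gives $\langle V,\Pi^1_{h}(x,\cdot)\rangle\le aV(x)+c$, so one recovers the pathwise version of (\ref{prop:barV}), namely $E_{x_0,y_0}[\bar{V}(\Psi_{n+1})\mid\mathcal{F}_n]\le a\bar{V}(\Psi_n)+2c$. On the complement of $K_\varkappa$ we have $\bar{V}(\Psi_n)\ge\varkappa^{-1}2c$, hence $2c\le\varkappa\bar{V}(\Psi_n)$, and the drift sharpens to the strict contraction
\[
E_{x_0,y_0}[\bar{V}(\Psi_{n+1})\mid\mathcal{F}_n]\le(a+\varkappa)\bar{V}(\Psi_n)\qquad\text{on }\{\Psi_n\notin K_\varkappa\}.
\]

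Next I would set $\theta=(a+\varkappa)^{-\zeta}>1$ and $\gamma=(a+\varkappa)^{1-\zeta}=\theta(a+\varkappa)$; since $\zeta\in(0,1)$ and $a+\varkappa\in(0,1)$, we have $\gamma\in(0,1)$. Introduce $u_n:=E_{x_0,y_0}[\theta^n\bar{V}(\Psi_n)\mathbf{1}_{\{d>n\}}]$ and $p_n:=E_{x_0,y_0}[\mathbf{1}_{\{d>n\}}]$. Because $d$ is a stopping time, $\{d>n\}\in\mathcal{F}_n$, $\{d>n+1\}\subseteq\{d>n\}$, and $\{d>n\}\subseteq\{\Psi_n\notin K_\varkappa\}$. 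Conditioning at time $n$ and inserting the contraction yields the recursion $u_{n+1}\le\theta(a+\varkappa)u_n=\gamma u_n$, whence $u_n\le\gamma^n\bar{V}(x_0,y_0)$. On the other hand, $\bar{V}\ge\varkappa^{-1}2c=:L$ off $K_\varkappa$ gives the lower bound $u_n\ge L\theta^np_n$, so combining the two,
\[
p_n\le\frac{\bar{V}(x_0,y_0)}{L}\Big(\frac{\gamma}{\theta}\Big)^n=\frac{\bar{V}(x_0,y_0)}{L}(a+\varkappa)^n.
\]
In particular $d<\infty$ almost surely, so no mass escapes to $\{d=\infty\}$.

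Finally I would assemble the exponential moment from this geometric tail by means of the elementary identity $E_{x_0,y_0}[\theta^d]=1+(\theta-1)\sum_{n\ge0}\theta^np_n$, valid for an integer-valued $d$. Substituting the tail estimate and using $\theta(a+\varkappa)=\gamma<1$, the series is dominated by the convergent geometric series $\sum_{n\ge0}\gamma^n$, giving
\[
E_{x_0,y_0}[(a+\varkappa)^{-\zeta d}]\le1+\frac{\theta-1}{(1-\gamma)L}\,\bar{V}(x_0,y_0),
\]
which is the asserted bound with $C_2=1$ and $C_1=(\theta-1)/((1-\gamma)L)$, both depending only on $\zeta,a,\varkappa,c$ and not on $(x_0,y_0)$ or the sequence $(h_i)$.

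I expect the main obstacle to be organising the contraction correctly rather than any single computation. The naive non-negative supermartingale $\theta^{n\wedge d}\bar{V}(\Psi_{n\wedge d})$ is \emph{not} enough: since $\bar{V}$ may be arbitrarily small (even zero) at the moment the chain enters $K_\varkappa$, optional stopping only controls $E_{x_0,y_0}[\theta^{n\wedge d}\bar{V}]$ and not $E_{x_0,y_0}[\theta^d]$. The decisive move is to truncate on $\{d>n\}$ and to use \emph{both} the strict contraction factor $\gamma<1$ and the uniform lower bound $\bar{V}\ge L$ available precisely off $K_\varkappa$; this produces a tail rate $a+\varkappa$ that is independent of $\zeta$. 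The hypothesis $\zeta<1$ is then exactly what guarantees $\theta(a+\varkappa)=(a+\varkappa)^{1-\zeta}<1$, i.e.\ it leaves the gap between the exponential weight $\theta$ and the tail rate that makes the final series converge.
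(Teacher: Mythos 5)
Your argument is correct and follows essentially the same route as the paper: there one works with the events $A_n=\{d>n\}$, shows $\int_{A_n}\bar V\,d\mathbf{P}\le(a+\varkappa)^{n-1}[a\bar V(x_0,y_0)+2c]$ by the same conditional drift sharpened off $K_{\varkappa}$, converts this into $\mathbf{P}(A_n)\le(a+\varkappa)^n\hat c$ via the lower bound $\bar V\ge\varkappa^{-1}2c$ on $A_n$, and then sums the exponential moment using $\zeta<1$, exactly as you do (your factor $\theta^n$ inside $u_n$ is cosmetic). The only slip is the base case of your recursion: if $(x_0,y_0)\in K_{\varkappa}$ the strict contraction is unavailable at step $0$, so the first step only gives $u_1\le\theta\bigl(a\bar V(x_0,y_0)+2c\bigr)$ rather than $\gamma u_0$; this is harmless, since it merely feeds an extra additive constant into $C_2$, which the statement allows --- the paper's own iteration stops at $A_1$ for precisely this reason.
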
 

\begin{proof}[Proof]
Fix $(x_0,y_0)\in X^2$. Let $\Psi=(x_n,y_n)_{n\in N}$ be the Markov chain with starting point $(x_0,y_0)$ and sequence of transition probability functions $\Big(\{C^1_{h_i}((x,y),\cdot):x,y\in X\}\Big)_{i\in N}$. Let $F_n\subset F$, $n\in N$, be the natural filtration in $\Omega$ associated with $\Psi$. We define
\[A_n=\{\omega\in\Omega: \; \Psi_i=(x_i(\omega), y_i(\omega))\notin K_{\varkappa}\; \text{ for }\:i=1,\ldots,n\}, \quad n\in N.\]
Obviously, $A_{n+1}\subset A_n$ and $A_n\in F_n$, for $n\in N$. 
In consequence of $(\ref{prop:barV})$, as well as the definitions of $A_n$ and $K_{\varkappa}$, the following inequalities are $\mathbf{P}$-a.s. satisfied in $\Omega$:
\[1_{A_n}E_{x_0,y_0}[\bar{V}(x_{n+1},y_{n+1})|F_n]\leq 1_{A_n}(a\bar{V}(x_n,y_n)+2c)\leq 	
1_{A_n}(a+\varkappa)\bar{V}(x_n,y_n).\]Accordingly, we obtain
\begin{align*}
\begin{aligned}
\int_{A_n}\bar{V}(x_n,y_n)d\mathbf{P}&\leq\int_{A_{n-1}}\bar{V}(x_n,y_n)d\mathbf{P}
=\int_{A_{n-1}}E[\bar{V}(x_n,y_n)|F_{n-1}]d\mathbf{P}\\
&\leq\int_{A_{n-1}}[a\bar{V}(x_{n-1},y_{n-1})+2c]d\mathbf{P}
\leq(a+\varkappa)\int_{A_{n-1}}\bar{V}(x_{n-1},y_{n-1})d\mathbf{P}.
\end{aligned}
\end{align*}
On applying these estimates finitely many times, we obtain
\[\int_{A_n}\bar{V}(x_n,y_n)d\mathbf{P}\leq(a+\varkappa)^{n-1}\int_{A_1}\bar{V}(x_1,y_1)d\mathbf{P}\leq(a+\varkappa)^{n-1}[a\bar{V}(x_0,y_0)+2c].\]
Note that
\begin{align*}
\begin{aligned}
\mathbf{P}(A_n)&\leq\int_{A_n}\varkappa(2c)^{-1}\bar{V}(x_n,y_n)d\mathbf{P}
&\leq\varkappa[2c(a+\varkappa)]^{-1}(a+\varkappa)^n[a\bar{V}(x_0,y_0)+2c].
\end{aligned}
\end{align*}
Set $\hat{c}:=\varkappa[2c(a+\varkappa)]^{-1}[a\bar{V}(x_0,y_0)+2c]$. Then, $\mathbf{P}(A_n)\leq(a+\varkappa)^n\hat{c}$. 
Fix $\zeta\in(0,1)$. Since $d$ takes natural values $n\in N$, we obtain
\begin{align*}
\begin{aligned}
\sum_{n=1}^{\infty}(a+\varkappa)^{-\zeta n}\mathbf{P}(A_n)
&\leq\sum_{n=1}^{\infty}(a+\varkappa)^{-\zeta n}(a+\varkappa)^n\hat{c}
&=\sum_{n=1}^{\infty}(a+\varkappa)^{(1-\zeta)n}\hat{c},\\
\end{aligned}
\end{align*}
which implies convergence of the series. The proof is complete by the definition of $\hat{c}$ and with properly choosen $C_1$, $C_2$.
\end{proof}
For every positive $r>0$, we define the set
\[C_r=\{(x,y)\in X^2:\; \varrho(x,y)<r\}.\]

\begin{lemma}\label{lemma2}
Fix $\tilde{a}\in(a,1)$. Let $C_r$ be the set defined above and suppose that $b\in M_{\text{fin}}(X^2)$ is such that $\text{supp }b\subset C_r$. There exists $\bar{\gamma}>0$ such that
\begin{align*}
(Q^n_{h_1,\ldots,h_n}b)(C_{\tilde{a}^nr})\geq\bar{\gamma}^n\|b\|
\end{align*}
for $\delta$ and $M$ defined in assumption (V) (see Section $\ref{sec:idea}$).
\end{lemma}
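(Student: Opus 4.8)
The plan is to reduce the claim to a single application of $Q^1$ and then iterate, using the semigroup identity (\ref{Q1Qnb}) together with the positivity of the coupling operators. The heart of the matter is the following one-step estimate: if $\rho>0$ and $b'\in M_{\text{fin}}(X^2)$ is non-negative with $\text{supp }b'\subset C_{\rho}$, then for every $h\in\bar{B}(0,\varepsilon)$ one has $(Q^1_{h}b')(C_{\tilde{a}\rho})\geq\bar{\gamma}\,\|b'\|$, where $\bar{\gamma}:=\frac{\delta}{M}\bigl(1-\tfrac{a}{\tilde{a}}\bigr)$ does \emph{not} depend on $\rho$, $h$, or $b'$. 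Granting this, I would restrict $Q^n_{h_1,\ldots,h_n}b$ to $C_{\tilde{a}^n r}$, call this non-negative sub-measure $b'$, note that $\|b'\|=(Q^n_{h_1,\ldots,h_n}b)(C_{\tilde{a}^n r})\geq\bar{\gamma}^n\|b\|$ by the inductive hypothesis, and apply the one-step estimate with $\rho=\tilde{a}^n r$. Since $Q^{n+1}_{h_1,\ldots,h_{n+1}}b=Q^1_{h_{n+1}}(Q^n_{h_1,\ldots,h_n}b)\geq Q^1_{h_{n+1}}b'$ by positivity and (\ref{Q1Qnb}), evaluating on $C_{\tilde{a}^{n+1}r}$ would yield $(Q^{n+1}_{h_1,\ldots,h_{n+1}}b)(C_{\tilde{a}^{n+1}r})\geq\bar{\gamma}^{n+1}\|b\|$, closing the induction, whose base case $n=1$ is the one-step estimate applied to $b$ itself (supported in $C_r$).

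To prove the one-step estimate I would work pointwise. Fix $(x,y)$ with $\varrho(x,y)<\rho$ and introduce the set of good times $G_x:=\{t\in[0,T]:\lambda(x,t)\leq\tilde{a}\}$. On $G_x$ the Lipschitz inequality (\ref{Lip}) gives $\varrho(T_{h}(x,t),T_{h}(y,t))\leq\lambda(x,t)\varrho(x,y)<\tilde{a}\rho$, so the entire diagonal mass of $Q^1_{h}$ sitting over $G_x$ is carried into $C_{\tilde{a}\rho}$. Reading off the definition (\ref{def:Qxy}) and using $\min\{p(x,t),p(y,t)\}\geq\delta$ on $(0,T]$ from assumption (V), I obtain $Q^1_{h}((x,y),C_{\tilde{a}\rho})\geq\int_{G_x}\min\{p(x,t),p(y,t)\}\,dt\geq\delta\,|G_x|$, where $|G_x|$ denotes Lebesgue measure.

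It then remains to bound $|G_x|$ from below uniformly in $x$. Here I would run a Chebyshev argument against the contraction-on-average condition (\ref{def:a}): since $\int_0^T\lambda(x,t)p(x,t)\,dt\leq a$ and $\lambda(x,t)>\tilde{a}$ off $G_x$, we get $\tilde{a}\int_{[0,T]\setminus G_x}p(x,t)\,dt\leq a$, whence $\int_{G_x}p(x,t)\,dt\geq 1-\tfrac{a}{\tilde{a}}>0$ by the normalization of $p$ and the choice $\tilde{a}>a$. Bounding the integrand by $M$ gives $|G_x|\geq\frac{1}{M}\bigl(1-\tfrac{a}{\tilde{a}}\bigr)$, so $Q^1_{h}((x,y),C_{\tilde{a}\rho})\geq\bar{\gamma}$ for every $(x,y)\in C_\rho$. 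Integrating this against $b'$, whose whole mass lives in $C_\rho$, finishes the one-step estimate.

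I expect the main obstacle to be precisely this uniform lower bound on the good-time mass. It is what forces both constants $\delta$ and $M$ from assumption (V) to enter $\bar{\gamma}$: the average contraction bound controls $\int_{G_x}p\,dt$, the upper bound $M$ converts this $p$-weighted estimate into a genuine Lebesgue estimate on $G_x$, and the lower bound $\delta$ reweights it back to the coupling mass. Crucially, it relies on the fact that the coupling $Q^1_{h}$ moves \emph{both} coordinates along the same time $t$, so that a single factor $\lambda(x,t)$ governs the distance between the two images; this is what lets one contraction-on-average hypothesis do all the work.
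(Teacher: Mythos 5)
Your proof is correct, and it arrives at exactly the same constant $\bar{\gamma}=\delta M^{-1}\bigl(1-\tfrac{a}{\tilde{a}}\bigr)$ as the paper, but by a genuinely different decomposition. The paper works with the full $n$-step kernel at once: it writes $(Q^n_{h_1,\ldots,h_n}b)(C_{\tilde{a}^nr})$ as an integral over the time-cube $(0,T)^n$, defines the bad set $\mathcal{T}_n'$ of time-tuples for which only the \emph{terminal} distance $\varrho(\tilde{x}^x_n,\tilde{x}^y_n)$ exceeds $\tilde{a}^nr$, and runs a single Chebyshev argument against the $n$-fold contraction $a^n$ to get $\int_{\mathcal{T}_n'}p\cdots p<(a/\tilde{a})^n$, before converting to Lebesgue measure via $M^n$ and back via $\delta^n$. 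You instead isolate a one-step lemma --- a uniform lower bound $Q^1_h((x,y),C_{\tilde{a}\rho})\geq\bar{\gamma}$ on $C_\rho$, obtained by applying Chebyshev to $\lambda(x,\cdot)$ pointwise in $t$ via the good set $G_x=\{t:\lambda(x,t)\leq\tilde{a}\}$ --- and then iterate by restricting to the nested sets $C_{\tilde{a}^kr}$ and using positivity together with the semigroup identity. Your event (distance below $\tilde{a}^kr$ at \emph{every} step $k$) is smaller than the paper's, yet yields the same bound. Your route is more modular and has a small technical advantage: the paper needs the inequality $1-(a/\tilde{a})^n\geq(1-a/\tilde{a})^n$ and hedges with ``for sufficiently big $n$'' (in fact the inequality holds for all $n\geq1$, so the hedge is unnecessary, but your induction sidesteps the issue entirely). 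Both arguments use $\delta$, $M$ and the normalization of $p$ in the same three roles you identify at the end.
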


\begin{proof}[Proof]
Recall that $\tilde{x}^x_n$ is given by (\ref{def:tilde_x}). Directly from $(\ref{def:Qb})$, $(\ref{def:Qxy^n})$ and (\ref{def:Qxy}) we obtain
\begin{align*}
\begin{aligned}
&(Q^n_{h_1,\ldots,h_n}b)(C_{\tilde{a}^nr})\\
&=\int_{X^2}\int_{X^2}\int_0^T\min\{p(u,t_n),p(v,t_n)\}
\delta_{(T_{h_n}(u,t_n),T_{h_n}(v,t_n))}(C_{\tilde{a}^nr})dt_n Q^{n-1}_{h_1,\ldots,h_{n-1}}((x,y),du\times dv) \: b(dx\times dy)\\
&=\int_{X^2}\Big[\int_{(0,T)^n}1_{C_{\tilde{a}^nr}}(\tilde{x}^x_n,\tilde{x}^y_n)
\min\{p(\tilde{x}^x_{n-1},t_n),p(\tilde{x}^y_{n-1},t_n)\}\ldots \min\{p(x,t_1),p(y,t_1)\}
dt_n\ldots dt_1\Big]b(dx\times dy).\\
\end{aligned}
\end{align*}
Note that $1_{C_{\tilde{a}^nr}}(\tilde{x}^x_n,\tilde{x}^y_n)=1$ if and only if $(t_1,\ldots,t_n)\in\mathcal{T}_n$, where 
\[\mathcal{T}_n:=\{(t_1,\ldots,t_n)\in(0,T)^n:\varrho(\tilde{x}^x_n,\tilde{x}^y_n)<\tilde{a}^nr\}.\]
Set $\mathcal{T}_n':=(0,T)^n\backslash\mathcal{T}_n$. 
Note that, according to assumption (II), we have
\begin{align*}
\begin{aligned}
&\int_{\mathcal{T}_n'}
\varrho(\tilde{x}^x_n,\tilde{x}^y_n)
p(\tilde{x}^x_{n-1},t_n)\ldots p(x,t_1)
dt_n\ldots dt_1 \leq a^n\varrho(x,y)< a^nr
\end{aligned}
\end{align*}
for $(x,y)\in C_r$. 
Comparing this with the definition of $\mathcal{T}_n'$, we obtain
\begin{align*}
\begin{aligned}
&\tilde{a}^n r \int_{\mathcal{T}_n'}p(\tilde{x}^x_{n-1},t_n)
\ldots p(x,t_1)dt_n\ldots dt_1< a^nr,
\end{aligned}
\end{align*}
which implies
\begin{align*}
\begin{aligned}
\int_{\mathcal{T}_n'}p(\tilde{x}^x_{n-1},t_n)
\ldots p(x,t_1)dt_n\ldots dt_1
<\frac{a^n}{\tilde{a}^n}<1.
\end{aligned}
\end{align*}
We then obtain that the integral over $\mathcal{T}_n$ is not less than $1-\Big(\frac{a}{\tilde{a}}\Big)^n\geq (1-\frac{a}{\tilde{a}})^n=:\gamma^n$, for sufficiently big $n\in N$, which provides, using assumption (V), that $|\mathcal{T}_n|\geq \Big(\frac{\gamma}{M}\Big)^n$. Finally,
\begin{align*}
\begin{aligned}
(Q^n_{h_1,\ldots,h_n}b)(C_{\tilde{a}^nr})&\geq\int_{X^2}\delta^n|\mathcal{T}_n|b(dx,dy)
\geq\delta^n\Big(\frac{\gamma}{M}\Big)^n\|b\|.
\end{aligned}
\end{align*}
If we set $\bar{\gamma}:=\delta M^{-1}\gamma$, the proof is complete.
\end{proof}

\begin{thm}\label{theorem2}
For every $\varkappa\in(0,1-a)$, there exists $n_0\in N$ such that
\[\|Q_{h_1,h_2,\ldots}^{\infty}((x,y),\cdot)\|\geq\frac{1}{2}\bar{\gamma}^{n_0}\quad\text{for $(x,y)\in K_{\varkappa}$,}\]
where $\bar{\gamma}>0$ is given in Lemma \ref{lemma2}.
\end{thm}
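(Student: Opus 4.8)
The plan is to use Lemma \ref{lemma2} only as a finite ``burn-in'' that drives the two coordinates into a small ball, and then to run a sharper, summable mass-defect estimate over the remaining infinite horizon. Throughout I identify $\|Q^{\infty}_{h_1,h_2,\ldots}((x,y),\cdot)\|$ with the probability that the coupling branch $Q$ is selected at every step of the augmented chain $\hat{\Psi}$, i.e. with the non-increasing limit $\lim_{n\to\infty}Q^n_{h_1,\ldots,h_n}((x,y),X^2)=\lim_{n\to\infty}\|Q^n_{h_1,\ldots,h_n}\delta_{(x,y)}\|$; monotonicity holds because each factor $Q^1_{h}((u,v),X^2)=\int_0^T\min\{p(u,t),p(v,t)\}\,dt\le 1$. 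First I would record that $K_{\varkappa}\subset C_{r_0}$ with $r_0:=2c\varkappa^{-1}$, since $\varrho(x,y)\le\bar{V}(x,y)<2c\varkappa^{-1}$ for $(x,y)\in K_{\varkappa}$. Fixing $\tilde{a}\in(a,1)$ and applying Lemma \ref{lemma2} to $b=\delta_{(x,y)}$ (so $\|b\|=1$, $\mathrm{supp}\,b\subset C_{r_0}$) gives, for all sufficiently large $n_0$, that $(Q^{n_0}_{h_1,\ldots,h_{n_0}}\delta_{(x,y)})(C_{\tilde{a}^{n_0}r_0})\ge\bar{\gamma}^{n_0}$. I then set $r_*:=\tilde{a}^{n_0}r_0$ and let $\beta$ be the restriction of $Q^{n_0}_{h_1,\ldots,h_{n_0}}\delta_{(x,y)}$ to $C_{r_*}$, so that $\|\beta\|\ge\bar{\gamma}^{n_0}$ and $\mathrm{supp}\,\beta\subset C_{r_*}$.

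Next I would estimate the mass lost when the dynamics is continued from $\beta$. Writing $g_j:=\|Q^j_{h_{n_0+1},\ldots}\beta\|$, the one-step defect is exactly $g_j-g_{j+1}=\int_{X^2}\bigl(1-\int_0^T\min\{p(u,t),p(v,t)\}\,dt\bigr)\,(Q^j\beta)(du\,dv)$, and by assumption (IV) the inner factor is bounded by $\omega(\varrho(u,v))$. Using that $\omega$ is concave with $\omega(0)=0$, whence $\omega(\rho)\le\omega(s)(1+\rho/s)$ for all $\rho,s>0$, together with $\phi(Q^j\beta)\le a^j\phi(\beta)\le a^j r_*\|\beta\|$ from (\ref{prop:phiQb}) (recall $\mathrm{supp}\,\beta\subset C_{r_*}$), the choice $s=a^j r_*$ yields $g_j-g_{j+1}\le 2\|\beta\|\,\omega(a^j r_*)$. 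Summing over $j\ge 0$ gives the tail bound $\|\beta\|-\lim_{j}g_j\le 2\|\beta\|\sum_{j\ge 0}\omega(a^j r_*)$.

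The decisive analytic input is that the Dini condition makes this tail not merely finite but arbitrarily small for small $r_*$: since the terms $\omega(a^j r_*)$ decrease in $j$, comparison with $\int_0^{\infty}\omega(a^x r_*)\,dx=(\log(1/a))^{-1}\int_0^{r_*}\tfrac{\omega(t)}{t}\,dt$ gives $\sum_{j\ge 0}\omega(a^j r_*)\le\omega(r_*)+(\log(1/a))^{-1}\int_0^{r_*}\tfrac{\omega(t)}{t}\,dt\to 0$ as $r_*\to 0$. Hence I would fix $n_0$ large enough that simultaneously Lemma \ref{lemma2} applies and $\sum_{j\ge 0}\omega(a^j r_*)\le\tfrac14$; then the total tail loss is at most $\tfrac12\|\beta\|$, so $\lim_{j}g_j\ge\tfrac12\|\beta\|\ge\tfrac12\bar{\gamma}^{n_0}$. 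Finally, since $\beta$ is a restriction of $Q^{n_0}\delta_{(x,y)}$ and each $Q^j$ is positivity-preserving, $Q^{n_0+j}\delta_{(x,y)}\ge Q^j\beta$, so $\|Q^{\infty}((x,y),\cdot)\|=\lim_{n}Q^n((x,y),X^2)\ge\lim_j g_j\ge\tfrac12\bar{\gamma}^{n_0}$. Because $r_0$, $\tilde{a}$, $n_0$ and $r_*$ depend only on $\varkappa$ (through $c$ and the fixed model data), this bound is uniform over $(x,y)\in K_{\varkappa}$.

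The step I expect to be the main obstacle is precisely the passage to the infinite horizon. Naively iterating Lemma \ref{lemma2} forces the contraction factor $\bar{\gamma}^n$ to appear at \emph{every} step and drives the bound to $0$; one cannot couple and geometrically shrink forever while charging $\bar{\gamma}$ each time. The resolution is the two-regime split together with the observation that the relevant quantity after the burn-in is the \emph{total surviving mass}, whose per-step defect is only $\omega(\text{distance})$ and whose distances contract on average via (\ref{prop:phiQb}); the Dini integral $\int_0^{r_*}\omega(t)/t\,dt$, rather than mere summability, is what forces the tail defect below the threshold $\tfrac12\|\beta\|$ and thereby produces the exact constant $\tfrac12\bar{\gamma}^{n_0}$.
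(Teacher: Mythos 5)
Your proposal is correct and follows essentially the same route as the paper: a finite burn-in via Lemma \ref{lemma2} to place mass $\bar{\gamma}^{n_0}$ in a small ball $C_{r_*}$, followed by the Dini-condition mass-defect estimate $\|Q^{n}b\|\geq\|b\|-\sum_j\omega(a^{j}\phi(b))$ driven by the contraction (\ref{prop:phiQb}), which guarantees that at least half the mass survives to the infinite horizon. The only differences are cosmetic: the paper proves the half-mass-survival bound first (using Jensen for the concave $\omega$, hence without your factor $2$ and the threshold $\tfrac14$) and invokes the function $\varphi$ from (\ref{varphi}) where you carry out the explicit integral comparison with $\int_0^{r_*}\omega(t)/t\,dt$.
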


\begin{proof}[Proof]
Note that, for every real numbers $u, v\in R$, there is a general rule: $\;\min\{u,v\}+|u-v|-u\geq 0$. Hence, for every $(x,y)\in X^2$ and $i\in N$, we obtain
\begin{align*}
\begin{aligned}
\int_0^T\Big[\min\{p(x,t),p(y,t)\}+|p(x,t)-p(y,t)|-p(x,t)\Big]dt\geq 0
\end{aligned}
\end{align*} 
and therefore, due to (\ref{def:Qxy}), 
\begin{align*}
\begin{aligned}
\|Q_{h_i}^1((x,y),\cdot)\|+\int_0^T|p(x,t)-p(y,t)|dt\geq 1.
\end{aligned}
\end{align*}

For every $b\in M_{\text{fin}}(X^2)$, due to the Dini condition (see assumption (IV)) and the Jensen inequality, we get
\begin{align*}
\begin{aligned}
\|Q^1_{h_i}b\|=\int_{X^2}Q^1_{h_i}((x,y),X^2)b(dx\times dy)
&=\int_{X^2}\|Q^1_{h_i}((x,y),\cdot)\|b(dx\times dy)\\
&\geq\|b\|-\int_{X^2}\omega(\varrho(x,y))b(dx\times dy)\geq\|b\|-\omega(\phi(b)).
\end{aligned}
\end{align*}
Then, by $(\ref{Q1Qnb})$,
\begin{align*}
\begin{aligned}
\|Q^n_{h_1,\ldots,h_n}b\|=\int_{X^2}Q_{h_n}^1((x,y),\cdot)(Q^{n-1}_{h_1,\ldots,h_{n-1}}b)(dx\times dy)&\geq\|Q^{n-1}_{h_1,\ldots,h_{n-1}}b\|-\omega(\phi(Q^{n-1}_{h_1,\ldots,h_{n-1}}b))\\
&\geq \|b\|-\omega(\phi(Q^1_{h_1}b))-\ldots-\omega(\phi(Q^n_{h_1,\ldots,h_n}b)).
\end{aligned}
\end{align*}
Following $(\ref{prop:phiQb})$ and recalling that $\omega$ is non-decreasing, we obtain
\begin{align*}
\begin{aligned}
\|Q^n_{h_1,\ldots,h_n}b\|\geq\|b\|-\sum_{i=1}^{n}\omega(a^{i-1}\phi(b)).
\end{aligned}
\end{align*}

See $(\ref{varphi})$ to recall the definition of $\varphi$. Thanks to assumption (IV), we know that $\lim_{t\to 0}\varphi(t) = 0$. Hence, we may choose $r>0$ such that if $\varrho(x,y)<r$ and therefore $a^{-1}\phi(b)\leq ra^{-1}\|b\|$, then $\sum_{i=1}^{n}\omega(a^{i-1}\phi(b))\leq\varphi(a^{-1}\phi(b))<\frac{1}{2}\|b\|$.

If $\text{supp }b\subset C_r$, then we obtain
\begin{align}\label{b}
\|Q_{h_1,h_2,\ldots}^{\infty}b\|\geq\frac{\|b\|}{2}.
\end{align}
Fix $\varkappa\in(0,1-a)$. It is clear that $K_{\varkappa}\subset C_{\varkappa^{-1} 2c}$. If we define 
$n_0:=\min\{n\in N:\: a^n(\varkappa)^{-1}2c<r\}$, 
then $C_{a^{n_0}\varkappa^{-1} 2c}\subset C_r$. 
Remembering that $Q_{h_1,\ldots,h_n,h_{n+1},\ldots,h_{n+m}}^{n+m}((x,y),\cdot)=(Q^m_{h_{n+1},\ldots,h_{n+m}}Q_{h_1,\ldots,h_n}^n)((x,y),\cdot)$ and using the Markov property, we obtain
\[Q_{h_1,h_2,\ldots}^{\infty}((x,y),X^2)= (Q^{\infty}_{h_{n_0+1},\ldots} Q^{n_0}_{h_1,\ldots,h_{n_0}})((x,y),X^2).\]
Then, according to $(\ref{b})$ and Lemma $\ref{lemma2}$, we obtain
\begin{align*}
\begin{aligned}
\|Q_{h_1,h_2,\ldots}^{\infty}((x,y),\cdot)\|
&=\|(Q^{\infty}_{h_{n_0+1},\ldots} Q^{n_0}_{h_1,\ldots,h_{n_0}})((x,y),\cdot)\|
\geq\frac{\|Q^{n_0}_{h_1,\ldots,h_{n_0}}((x,y),\cdot)|_{C_r}\|}{2}\\
&=\frac{Q^{n_0}_{h_1,\ldots,h_{n_0}}((x,y),C_r)}{2}
\geq\frac{Q^{n_0}_{h_1,\ldots,h_{n_0}}((x,y),C_{a^{n_0}\varkappa^{-1} 2c})}{2}
\geq\frac{\bar{\gamma}^{n_0}}{2}
\end{aligned}
\end{align*}
for $(x,y)\in K_{\varkappa}$. This finishes the proof.
\end{proof}

\begin{definition}
Coupling time $\tau:(X^2\times\{0,1\})^{\infty}\to N$ is defined as follows
\[\tau((x_n,y_n,\theta_n)_{n\in N})=\inf\{n\in N:\;\theta_k=1\;\text{ for }k\geq n\}.\] 
As a convention, we put $\tau((x_n,y_n,\theta_n)_{n\in N})=\infty$, if there is no $n\in N$ such that $\theta_k=1\;\text{ for every }k\geq n$.
\end{definition}

\begin{thm}\label{theorem3}
There exist $\tilde{q}\in(0,1)$ and $C_3>0$ such that
\[E_{x,y}[\tilde{q}^{-\tau}]\leq C_3(1+\bar{V}(x,y))\quad \text{for $(x,y)\in X^2$.}\]
\end{thm}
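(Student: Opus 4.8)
The plan is to read the coupling time through the flag coordinate $\theta$. By construction of $\hat\Psi$, a transition is recorded with $\theta=1$ exactly when it is made according to $Q$ and with $\theta=0$ when it uses the residual part $R$; consequently the total mass $\|Q_{h_1,h_2,\ldots}^{\infty}((x,y),\cdot)\|$ is precisely the probability, started from $(x,y)$, that $\theta_k=1$ for every $k$, i.e. that the two coordinates couple once and for all. Theorem \ref{theorem2} thus says that from any point of $K_{\varkappa}$ this probability is at least $p_0:=\tfrac12\bar\gamma^{n_0}>0$, and since the constants $\bar\gamma,n_0$ do not depend on the sequence $(h_i)$, the strong Markov property gives the same lower bound for coupling forever after \emph{any} stopping time at which $\Psi$ lies in $K_{\varkappa}$. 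The second tool is Theorem \ref{theorem1}: with $d$ the first entrance time to $K_{\varkappa}$ it yields $E_{x,y}[(a+\varkappa)^{-\zeta d}]\le C_1\bar V(x,y)+C_2$ for every $\zeta\in(0,1)$, again uniformly in $(h_i)$. I set $\tilde q:=(a+\varkappa)^{\zeta}\in(0,1)$, so that $\tilde q^{-d}$ is exactly the weight controlled above; the value of $\zeta$ (small) will be fixed only at the end.

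First I would reduce everything to a uniform bound on $K_{\varkappa}$. Put $g:=\sup_{(x,y)\in K_{\varkappa}}E_{x,y}[\tilde q^{-\tau}]$, the supremum also taken over all admissible sequences $(h_i)$. Using $\tilde q^{-\tau}\le \tilde q^{-d}\,\tilde q^{-(\tau-d)^{+}}$ together with the strong Markov property at $d$ (the post-$d$ state lies in $K_{\varkappa}$, and $(\tau-d)^{+}$ is dominated by the coupling time of the shifted chain, whose estimates are the same by the uniformity in $(h_i)$), one obtains
\[
E_{x,y}[\tilde q^{-\tau}]\;\le\; g\,E_{x,y}[\tilde q^{-d}]\;\le\; g\big(C_1\bar V(x,y)+C_2\big)\;\le\; C_3\big(1+\bar V(x,y)\big),
\]
which is the assertion of the theorem, once $g<\infty$ is established. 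Here it is essential that the transition law of $\hat\Psi$ does not depend on the current value of $\theta$, so that the shifted process is again a copy of the coupling chain started from $\hat\Psi_d$.

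It remains to show $g<\infty$, and this is the heart of the matter. Let $\tau_1<\tau_2<\cdots$ be the successive entrance times of $\Psi$ to $K_{\varkappa}$ (so $\tau_1=d$), and let $\Delta_m:=\tau_{m+1}-\tau_m$ denote the length of the $m$-th excursion away from $K_{\varkappa}$. At each $\tau_m$ the chain couples forever with conditional probability at least $p_0$, so the first successful attempt $J$ satisfies $\mathbf P(J>k)\le(1-p_0)^k$ and $\tau\le\tau_J+1$. Conditioning at the stopping times $\tau_m$ and bounding the success indicator by $1$ leads to the one-step recursion $g\le \tilde q^{-1}+\rho\,g$, where $\rho:=\sup_{(x,y)\in K_{\varkappa}}E_{x,y}\big[\mathbf 1_{\{\text{no coupling forever}\}}\,\tilde q^{-\Delta_1}\big]$; moreover each excursion starts, after one step of the drift (\ref{prop:barV}), from a state whose mean value of $\bar V$ is at most $a\varkappa^{-1}2c+2c$, so Theorem \ref{theorem1} gives a uniform bound $D_2:=\sup_{(x,y)\in K_{\varkappa}}E_{x,y}[\tilde q^{-2\Delta_1}]<\infty$ as soon as $\zeta<\tfrac12$ (this is what forces $\tilde q^{2}>a+\varkappa$). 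Solving the recursion yields $g\le \tilde q^{-1}/(1-\rho)$, finite provided $\rho<1$.

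The main obstacle is precisely to secure $\rho<1$, and the difficulty is that the failure event at a given attempt and the length of the ensuing excursion are measurable with respect to the same post-$\tau_m$ history, so one cannot simply multiply the gain $1-p_0$ by the time weight. I would break this correlation by Cauchy--Schwarz,
\[
\rho\;\le\;\Big(\sup_{(x,y)\in K_{\varkappa}}\big(1-\|Q_{h_1,h_2,\ldots}^{\infty}((x,y),\cdot)\|\big)\Big)^{1/2}D_2^{1/2}\;\le\;(1-p_0)^{1/2}\,D_2^{1/2},
\]
using Theorem \ref{theorem2} for the first factor. The remaining quantitative point is that $D_2\to 1$ as $\tilde q\uparrow 1$: since the returns to $K_{\varkappa}$ are almost surely finite (a by-product of Theorem \ref{theorem1}) and $\Delta_1\ge 1$, monotone convergence gives $E_{x,y}[\tilde q^{-2\Delta_1}]\downarrow 1$, uniformly on $K_{\varkappa}$. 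Hence $\rho^2\le(1-p_0)D_2\to 1-p_0<1$, so choosing $\zeta>0$ small enough (equivalently $\tilde q$ close enough to $1$) makes $\rho<1$ and closes the argument; this final balancing of the coupling probability against the return-time moment is the only delicate step.
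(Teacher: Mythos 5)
Your overall strategy is the same as the paper's: decompose along successive visits to $K_{\varkappa}$, use Theorem \ref{theorem2} for a uniform success probability $p_0=\tfrac12\bar\gamma^{n_0}$ at each visit, use Theorem \ref{theorem1} for exponential moments of the return times, and decouple failure events from time weights by a H\"older-type inequality with a final balancing (you fix Cauchy--Schwarz and send $\tilde q\uparrow 1$; the paper fixes $\alpha=(a+\varkappa)^{-1/2}$ and sends the H\"older exponent $p\to\infty$). The difficulty is that your central recursion $g\le\tilde q^{-1}+\rho g$ has a genuine gap. To obtain it you must factor
\[
E_{x,y}\bigl[\mathbf{1}_{\{\text{no coupling forever}\}}\,\tilde q^{-\Delta_1}\,\tilde q^{-\tau\circ\Gamma_{\Delta_1}}\bigr]\;\le\;E_{x,y}\bigl[\mathbf{1}_{\{\text{no coupling forever}\}}\,\tilde q^{-\Delta_1}\bigr]\cdot g,
\]
which requires $\mathbf{1}_{\{\text{no coupling forever}\}}\tilde q^{-\Delta_1}$ to be measurable with respect to the $\sigma$-algebra at the return time $\Delta_1$. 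It is not: the failure event is $\{\exists\, k\ge 1:\theta_k=0\}$ and depends on the entire future, including the very portion whose conditional expectation you replace by $g$. Your Cauchy--Schwarz step only treats the correlation between the indicator and $\Delta_1$ \emph{inside} $\rho$; it does not touch the correlation between the indicator and $\tau\circ\Gamma_{\Delta_1}$. If you try to repair this by splitting the failure event into ``some $\theta_k=0$ with $k\le\Delta_1$'' (measurable at time $\Delta_1$, absorbable into $\rho g$) and ``all flags equal one up to $\Delta_1$ but some later flag is zero'', the second piece produces $\sup_{K_\varkappa}E[\mathbf{1}_{\{\text{fail}\}}\tilde q^{-\tau}]$ multiplied by $\sup_{K_\varkappa}E[\tilde q^{-\Delta_1}]\ge 1$, and the resulting system of inequalities does not close. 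Iterating Cauchy--Schwarz instead leads to an infinite regress through $E[\tilde q^{-2\tau}]$, $E[\tilde q^{-4\tau}]$, and so on.

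The paper sidesteps exactly this trap by not running a recursion at all: it writes $\alpha^{\hat\tau/p}=\sum_k\alpha^{d_k/p}\mathbf{1}_{\{\sigma=k\}}$, applies H\"older once to the whole sum, and then controls the two factors by two \emph{separate} clean estimates --- the unconditioned moments $E_{x,y}[\alpha^{d_k}]\le\eta^{k-1}(C_1\bar V(x,y)+C_2)$ by iterating the strong Markov property with no failure indicator present, and the tail $\hat C^{\infty}_{h_1,h_2,\ldots}((x,y),\{\sigma>k\})\le(1-\tfrac12\bar\gamma^{n_0})^k$. Two further points: your claim $\mathbf P(J>k)\le(1-p_0)^k$ needs the attempts to be organized so that the failure of attempt $m$ is decided by the start of attempt $m+1$ (success events at successive visits are nested, so conditioning naively only yields the bound $1-p_0$ independent of $k$; the paper's inequality (\ref{condition2}) needs the same care); and ``$D_2\to 1$ uniformly on $K_\varkappa$ by monotone convergence'' is not a valid deduction over an infinite set --- the uniformity should instead come from Jensen, $E[\tilde q^{-2\Delta_1}]\le\bigl(E[(a+\varkappa)^{-\zeta_0\Delta_1}]\bigr)^{2\zeta/\zeta_0}$ with $\zeta_0\in(0,1)$ fixed and $\zeta\to 0$. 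The last point is cosmetic, but the measurability failure in the recursion is a real obstruction, and as written the argument does not go through.
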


\begin{proof}[Proof]
Fix $\varkappa\in(0,1-a)$ and $(x,y)\in X^2$. To simplify notation, we write $\alpha=(a+\varkappa)^{-\frac{1}{2}}$. 
Let $d$ be the random moment of the first visit in $K_{\varkappa}$. 
Suppose that
\[d_1=d,\quad d_{n+1}=d_n+d\circ \Gamma_{d_n},\]
where $n\in N$ and $\Gamma_n$ are shift operators on $(X^2\times\{0,1\})^{\infty}$, i.e. 
$\Gamma_n((x_k,y_k,\theta_k)_{k\in N})=(x_{k+n},y_{k+n},\theta_{k+n})_{k\in N}$.
Theorem $\ref{theorem1}$ implies that every $d_n$ is $C_{h_1,h_2,\ldots}^{\infty}((x,y),\cdot)$-a.s. finished. The strong Markov property shows that
\[E_{x,y}[\alpha^d\circ \Gamma_{d_n}|F_{d_n}]=E_{(x_{d_n},y_{d_n})}[\alpha^d]\quad \text{for }n\in N,\]
where $F_{d_n}$ denotes the $\sigma$-algebra on $(X^2\times\{0,1\})$ generated by $d_n$ and $\Psi=(x_n,y_n)_{n\in N}$ is the non-homogenous Markov chain with sequence of transition probability functions $(\{C_{h_i}^{1}((x,y),\cdot):x,y\in X\})_{i\in N}$. 
By Theorem $\ref{theorem1}$ and the definition of $K_{\varkappa}$, we obtain
\[E_{x,y}[\alpha^{d_{n+1}}]=E_{x,y}\Big[\alpha^{d_n}E_{(x_{d_n},y_{d_n})}[\alpha^d]\Big]\leq E_{x,y}[\alpha^{d_n}](C_1\varkappa^{-1} 2c+C_2).\]
Fix $\eta=C_1\varkappa^{-1} 2c+C_2$. Consequently,
\begin{align}\label{condition1}
E_{x,y}[\alpha^{d_{n+1}}]\leq\eta^n E_{x,y}[\alpha^d]\leq\eta^n[C_1\bar{V}(x,y)+C_2].
\end{align}
We define $\hat{\tau}((x_n,y_n,\theta_n)_{n\in N})=\inf\{n\in N:\; (x_n,y_n)\in K_{\varkappa}, \ \;\theta_k=1\:\text{ for }k\geq n\}$ 
and $\sigma=\inf\{n\in N:\;\hat{\tau}=d_n\}$. 
By Theorem $\ref{theorem2}$, there is $n_0\in N$ such that
\begin{align}\label{condition2}
\hat{C}_{h_1,h_2,\ldots}^{\infty}((x,y),\{\sigma>n\})\leq(1-\frac{\bar{\gamma}^{n_0}}{2})^n \quad\text{for }n\in N.
\end{align}
Let $p>1$. By the H\"{o}lder inequality, $(\ref{condition1})$ and $(\ref{condition2})$, we obtain
\begin{align*}
\begin{aligned}
E_{x,y}[\alpha^{\frac{\hat{\tau}}{p}}]&\leq\sum_{k=1}^{\infty}E_{x,y}[\alpha^{\frac{d_k}{p}}1_{\sigma=k}]
\leq\sum_{k=1}^{\infty}\Big(E_{x,y}[\alpha^{d_k}]\Big)^{\frac{1}{p}}\Big(\hat{C}_{h_1,h_2,\ldots}^{\infty}((x,y),\{\sigma=k\})\Big)^{(1-\frac{1}{p})}\\
&\leq[C_1\bar{V}(x,y)+C_2]^{\frac{1}{p}}\eta^{-\frac{1}{p}}\sum_{k=1}^{\infty}\eta^{\frac{k}{p}}(1-\frac{1}{2}\bar{\gamma}^{n_0})^{(k-1)(1-\frac{1}{p})}\\
&=[C_1\bar{V}(x,y)+C_2]^{\frac{1}{p}}\eta^{-\frac{1}{p}}(1-\frac{1}{2}\bar{\gamma}^{n_0})^{-(1-\frac{1}{p})}\sum_{k=1}^{\infty}\Big[\Big(\frac{\eta}{1-\frac{1}{2}\bar{\gamma}^{n_0}}\Big)^{\frac{1}{p}}(1-\frac{1}{2}\bar{\gamma}^{n_0})\Big]^k.
\end{aligned}
\end{align*}
For $p$ sufficiently large and $\tilde{q}=\alpha^{-\frac{1}{p}}$, we get
\[E_{x,y}[\tilde{q}^{-\hat{\tau}}]=E_{x,y}[\alpha^{\frac{\hat{\tau}}{p}}]\leq(1+\bar{V}(x,y))C_3\]
for some $C_3$. Since $\tau\leq\hat{\tau}$, we finish the proof.
\end{proof}

\begin{lemma}\label{theorem_new}
Let $f\in\mathcal{L}$. Then, there exist $q\in(0,1)$ and $C_5>0$ such that
\begin{align*}
\begin{aligned}
\int_{X^2}|f(u)-f(v)|(\Pi_{X^2}^*\Pi^*_n\hat{C}^{\infty}_{h_1,h_2\ldots}((x,y),\cdot))(du\times dv)\leq q^nC_5(1+\bar{V}(x,y))\;\text{for every $x,y\in X$, $n\in N$,}
\end{aligned}
\end{align*}
where $\Pi^*_n:(X^2\times\{0,1\})^{\infty}\to X^2\times\{0,1\}$ are the projections on the $n$-th component and $\Pi^*_{X^2}:X^2\times\{0,1\}\to X^2$ is the projection on $X^2$.
\end{lemma}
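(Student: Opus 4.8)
The plan is to read the left-hand side as an expectation under the coupling and then bound it by splitting at the coupling time $\tau$: for the part where coupling has not yet occurred we use the exponential tail from Theorem \ref{theorem3}, while for the part where it has occurred we use the contraction of the $Q$-kernel from $(\ref{prop:phiQb})$. First I would note that $\Pi^*_{X^2}\Pi^*_n\hat{C}^{\infty}_{h_1,h_2,\ldots}((x,y),\cdot)$ is exactly the law of the pair $(x_n,y_n)$, the first two coordinates of $\hat{\Psi}$ at time $n$, so the integral in question equals $E_{x,y}[|f(x_n)-f(y_n)|]$. Since $f\in\mathcal{L}$ is $1$-Lipschitz and satisfies $|f|\le 1$, we have $|f(x_n)-f(y_n)|\le\min\{\varrho(x_n,y_n),2\}$, and it is enough to estimate $E_{x,y}[\min\{\varrho(x_n,y_n),2\}]$.

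Next I would fix the threshold $m=\lfloor n/2\rfloor$ and split the expectation according to the event $\{\tau\le m\}$. On its complement $\{\tau>m\}$ I bound $\min\{\varrho(x_n,y_n),2\}\le 2$ and apply Markov's inequality together with Theorem \ref{theorem3}: $\mathbf{P}(\tau>m)\le\tilde{q}^{m}E_{x,y}[\tilde{q}^{-\tau}]\le\tilde{q}^{m}C_3(1+\bar{V}(x,y))$, which is geometrically small.

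The heart of the argument is the coupled part. Here the key observation is the containment $\{\tau\le m\}\subseteq\{\theta_{m+1}=\cdots=\theta_n=1\}$, immediate from the definition of $\tau$: once all $\theta_k$ equal $1$ from some time $\le m$ on, in particular the $Q$-transition is used at every step between $m$ and $n$. Consequently $1_{\{\tau\le m\}}\le 1_{\{\theta_{m+1}=\cdots=\theta_n=1\}}$, so the non-adapted coupling-time indicator may be replaced by the latter event, whose conditional sub-law given $F_m$ is, by the construction in Section \ref{sec:coupling_ifs} and $(\ref{def:Qxy^n})$, precisely $Q^{n-m}_{h_{m+1},\ldots,h_n}((x_m,y_m),\cdot)$. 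Applying $(\ref{prop:phiQb})$ then gives $E_{x,y}[\varrho(x_n,y_n)1_{\{\theta_{m+1}=\cdots=\theta_n=1\}}\mid F_m]\le a^{n-m}\varrho(x_m,y_m)$, and integrating, using $(\ref{prop:phi})$ and the drift estimate $(\ref{prop:barV})$ for the coupling marginals $\Pi^m_{h_1,\ldots,h_m}(x,\cdot)$ and $\Pi^m_{h_1,\ldots,h_m}(y,\cdot)$, yields $E_{x,y}[\varrho(x_n,y_n)1_{\{\tau\le m\}}]\le a^{n-m}(a^m\bar{V}(x,y)+\tfrac{2c}{1-a})$.

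Finally I would combine the two contributions. With $m=\lfloor n/2\rfloor$ the bound consists of the three geometrically decaying terms $a^n\bar{V}(x,y)$, $a^{n-m}\tfrac{2c}{1-a}$ and $2\tilde{q}^{m}C_3(1+\bar{V}(x,y))$; choosing $q=\max\{\sqrt{a},\sqrt{\tilde{q}}\}\in(0,1)$ and a suitable $C_5$ absorbs all constants and produces the claimed inequality. I expect the main obstacle to be the identification in the third step: one must track the augmented-space construction carefully enough to see that restricting $\hat{C}^{\infty}$ to $\{\theta_{m+1}=\cdots=\theta_n=1\}$ reproduces the sub-probability kernel $Q^{n-m}$, and---this is the decisive trick---to realise that the future-dependent indicator $1_{\{\tau\le m\}}$ can simply be enlarged to this event, which is what allows the clean contraction $(\ref{prop:phiQb})$ to be used without ever conditioning on the non-adapted event $\{\tau=k\}$.
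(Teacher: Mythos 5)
Your proposal is correct and follows essentially the same route as the paper: split at time $\lfloor n/2\rfloor$ according to whether the coupling time $\tau$ has occurred, bound the uncoupled part by $2\,\hat{C}^{\infty}_{h_1,h_2,\ldots}((x,y),\{\tau>n/2\})$ via Theorem \ref{theorem3} and Chebyshev, and bound the coupled part by the contraction $(\ref{prop:phiQb})$ combined with the drift estimates $(\ref{prop:barV})$ and $(\ref{prop:phi})$, ending with $q=\max\{a^{1/2},\tilde{q}^{1/2}\}$. Your explicit enlargement of the non-adapted event $\{\tau\le m\}$ to $\{\theta_{m+1}=\cdots=\theta_n=1\}$ makes precise a step the paper passes over quickly (its ``iterative application of $(\ref{prop:phiQb})$'' to the restricted measure), but it is the same argument.
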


\begin{proof}[Proof]
For $n\in N$ we define sets
\begin{align*}
\begin{aligned}
A_{\frac{n}{2}}=\{t\in(X^2\times\{0,1\})^{\infty}:\:\tau(t)\leq\frac{n}{2}\},\\
B_{\frac{n}{2}}=\{t\in(X^2\times\{0,1\})^{\infty}:\:\tau(t)>\frac{n}{2}\}.
\end{aligned}
\end{align*}
Note that $A_{\frac{n}{2}}\cap B_{\frac{n}{2}}=\emptyset$ and $A_{\frac{n}{2}}\cup B_{\frac{n}{2}}=(X^2\times\{0,1\})^{\infty}$, so, for $n\in N$, we have
\[\hat{C}_{h_1,h_2,\ldots}^{\infty}((x,y),\cdot)=\hat{C}_{h_1,h_2,\ldots}^{\infty}((x,y),\cdot)|_{A_{\frac{n}{2}}}+\hat{C}_{h_1,h_2,\ldots}^{\infty}((x,y),\cdot)|_{B_{\frac{n}{2}}}.\]
Hence,
\begin{align*}
\begin{aligned}
&\int_{X^2}|f(u)-f(v)|(\Pi_{X^2}^*\Pi_n^*\hat{C}_{h_1,h_2,\ldots}^{\infty}((x,y),\cdot)|_{A_{\frac{n}{2}}})(du\times dv)\\
&\qquad +\int_{X^2}|f(u)-f(v)|(\Pi_{X^2}^*\Pi_n^*\hat{C}_{h_1,h_2,\ldots}^{\infty}((x,y),\cdot)|_{B_{\frac{n}{2}}})(du\times dv)\\
&\leq\int_{X^2}\varrho(u,v)(\Pi_{X^2}^*\Pi_n^*\hat{C}_{h_1,h_2,\ldots}^{\infty}((x,y),\cdot)|_{A_{\frac{n}{2}}})(du\times dv)+2\hat{C}_{h_1,h_2,\ldots}^{\infty}((x,y),B_{\frac{n}{2}}).
\end{aligned}
\end{align*}
Note that, by iterative application of $(\ref{prop:phiQb})$, we obtain
\begin{align*}
\begin{aligned}
\int_{X^2}\varrho(u,v)(\Pi_{X^2}^*\Pi_n^*\hat{C}_{h_1,h_2,\ldots}^{\infty}((x,y),\cdot)|_{A_{\frac{n}{2}}})(du,dv)
&=\phi(\Pi_{X^2}^*\Pi_n^*(\hat{C}_{h_1,h_2,\ldots}^{\infty}((x,y),\cdot)|_{A_{\frac{n}{2}}}))\\
&\leq a^{\lfloor\frac{n}{2}\rfloor}\phi(\Pi_{X^2}^*\Pi_{\lfloor\frac{n+1}{2}\rfloor}^*(\hat{C}_{h_1,h_2,\ldots}^{\infty}((x,y),\cdot)|_{A_{\frac{n}{2}}})).
\end{aligned}
\end{align*}
Then, it follows from $(\ref{prop:barV})$ and $(\ref{prop:phi})$ that
\begin{align*}
\begin{aligned}
\phi(\Pi_{X^2}^*\Pi_{\lfloor\frac{n+1}{2}\rfloor}^*(\hat{C}_{h_1,h_2,\ldots}^{\infty}((x,y),\cdot)|_{A_{\frac{n}{2}}}))\leq a^{\lfloor\frac{n+1}{2}\rfloor}\bar{V}(x,y)+\frac{2c}{1-a}.
\end{aligned}
\end{align*}
We obtain
\begin{align*}
&\int_{X^2}|f(u)-f(v)|(\Pi_{X^2}^*\Pi^*_n\hat{C}^{\infty}_{h_1,h_2\ldots}((x,y),\cdot))(du\times dv)\\
&\qquad\leq a^{\lfloor\frac{n}{2}\rfloor}\Big[a^{\lfloor\frac{n+1}{2}\rfloor}\bar{V}(x,y)+\frac{2c}{1-a}\Big]+2\hat{C}_{h_1,h_2,\ldots}^{\infty}((x,y),B_{\frac{n}{2}}).
\end{align*}
It follows from Theorem $\ref{theorem3}$ and the Chebyshev inequality that
\begin{align*}
\begin{aligned}
\hat{C}_{h_1,h_2,\ldots}^{\infty}((x,y),B_{\frac{n}{2}})=\hat{C}_{h_1,h_2,\ldots}^{\infty}((x,y),\{\tau>\frac{n}{2}\})
&=\hat{C}_{h_1,h_2,\ldots}^{\infty}((x,y),\{\tilde{q}^{-\tau}\geq\tilde{q}^{-\frac{n}{2}}\})\\
&\leq\frac{E_{x,y}[\tilde{q}^{-\tau}]}{\tilde{q}^{-\frac{n}{2}}}\leq\tilde{q}^{\frac{n}{2}}C_3(1+\bar{V}(x,y)),
\end{aligned}
\end{align*}
for some $\tilde{q}\in(0,1)$ and $C_3>0$. 
Finally,
\[\int_{X^2}|f(u)-f(v)|(\Pi_{X^2}^*\Pi^*_n\hat{C}^{\infty}_{h_1,h_2\ldots}((x,y),\cdot))(du\times dv)\leq a^{\lfloor\frac{n}{2}\rfloor}C_4(1+\bar{V}(x,y))+2\tilde{q}^{\frac{n}{2}}C_3(1+\bar{V}(x,y)),\]
where $C_4=\max\{a^{\frac{1}{2}},(1-a)^{-1}2c\}$. Setting $q:=\max\{a^{\frac{1}{2}},\tilde{q}^{\frac{1}{2}}\}$ and $C_5:=C_4+2C_3$, gives our claim.
\end{proof}

\begin{remark}\label{rem_g}
If $g:X\to R$ is an arbitrary bounded and Lipschitz function with constant $L_g$, then, there are $q\in(0,1)$ and $C_5>0$, exactly the same as in Lemma \ref{theorem_new}, for which we obtain
\begin{align*}
\begin{aligned}
\int_{X^2}|g(u)-g(v)|(\Pi_{X^2}^*\Pi^*_n\hat{C}^{\infty}_{h_1,h_2\ldots}((x,y),\cdot))(du\times dv)\leq Gq^nC_5(1+\bar{V}(x,y))\;\text{for every $x,y\in X$, $n\in N$,}
\end{aligned}
\end{align*}
where $G:=\max\{L_g,\sup_{x\in X}|g(x)|\}$.
\end{remark}

\begin{thm}\label{theorem4}
There exist $q\in(0,1)$ and $C_5>0$ such that
\[\|\Pi^n_{h_1,\ldots,h_n}(x,\cdot)-\Pi^n_{h_1,\ldots,h_n}(y,\cdot)\|_{\mathcal{L}}\leq q^nC_5(1+\bar{V}(x,y))\quad\text{for }\:x,y\in X\;\text{ and }\:n\in N.\]
\end{thm}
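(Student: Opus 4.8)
The plan is to reduce the Fortet--Mourier distance between the two one-dimensional distributions to an integral against the coupling measure, and then invoke Lemma~\ref{theorem_new} essentially verbatim. The whole argument is short because the hard estimates have already been carried out; this theorem is meant to repackage them.

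First I would fix an arbitrary $f\in\mathcal{L}$ and recall from Section~\ref{sec:coupling_ifs} that $C^n_{h_1,\ldots,h_n}((x,y),\cdot)$ is a coupling of $\Pi^n_{h_1,\ldots,h_n}(x,\cdot)$ and $\Pi^n_{h_1,\ldots,h_n}(y,\cdot)$, i.e. its two $X$-marginals are exactly these measures. Consequently the difference of integrals is
\[\langle f,\Pi^n_{h_1,\ldots,h_n}(x,\cdot)\rangle-\langle f,\Pi^n_{h_1,\ldots,h_n}(y,\cdot)\rangle=\int_{X^2}\big(f(u)-f(v)\big)\,C^n_{h_1,\ldots,h_n}((x,y),du\times dv),\]
and, taking absolute values and using $f\in\mathcal{L}$,
\[\big|\langle f,\Pi^n_{h_1,\ldots,h_n}(x,\cdot)\rangle-\langle f,\Pi^n_{h_1,\ldots,h_n}(y,\cdot)\rangle\big|\leq\int_{X^2}|f(u)-f(v)|\,C^n_{h_1,\ldots,h_n}((x,y),du\times dv).\]

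The crucial identification is that $C^n_{h_1,\ldots,h_n}((x,y),\cdot)$ coincides with the $X^2$-marginal at time $n$ of the augmented pathspace measure, namely $C^n_{h_1,\ldots,h_n}((x,y),\cdot)=\Pi_{X^2}^*\Pi_n^*\hat{C}^{\infty}_{h_1,h_2,\ldots}((x,y),\cdot)$. This is immediate from the construction of $\hat{\Psi}$ in Section~\ref{sec:coupling_ifs}: the step transition splits as $Q^n_{h_1,\ldots,h_n}+R^n_{h_1,\ldots,h_n}=C^n_{h_1,\ldots,h_n}$ according to the flag $\theta\in\{0,1\}$, and the initial law $\delta_{(x_0,y_0,1)}$ projects to $\delta_{(x_0,y_0)}$, so forgetting the flag simply returns the coupling $C^n$. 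Granting this, the right-hand side above is precisely the quantity estimated in Lemma~\ref{theorem_new}, which bounds it by $q^nC_5(1+\bar{V}(x,y))$ with the same $q\in(0,1)$ and $C_5>0$.

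Finally I would take the supremum over $f\in\mathcal{L}$. Since the bound $q^nC_5(1+\bar{V}(x,y))$ is independent of $f$, it passes to the supremum and yields $\|\Pi^n_{h_1,\ldots,h_n}(x,\cdot)-\Pi^n_{h_1,\ldots,h_n}(y,\cdot)\|_{\mathcal{L}}\leq q^nC_5(1+\bar{V}(x,y))$, as claimed. I expect the only point genuinely requiring care to be the marginal identification: one must be sure that projecting the augmented coupling $\hat{C}^{\infty}$ down to $X^2$ reproduces exactly $C^n$, so that Lemma~\ref{theorem_new} applies to the integral appearing above. Once that is recorded, the theorem follows with no additional estimation, as a direct corollary of Lemma~\ref{theorem_new}.
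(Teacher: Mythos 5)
Your proposal is correct and follows essentially the same route as the paper: both express the Fortet--Mourier norm as a supremum of $\int_{X^2}(f(u)-f(v))$ integrated against the time-$n$ projection of the augmented coupling measure $\hat{C}^{\infty}_{h_1,h_2,\ldots}((x,y),\cdot)$, whose $X$-marginals are $\Pi^n_{h_1,\ldots,h_n}(x,\cdot)$ and $\Pi^n_{h_1,\ldots,h_n}(y,\cdot)$, and then invoke Lemma~\ref{theorem_new}. The only difference is cosmetic: you route the identity explicitly through $C^n_{h_1,\ldots,h_n}$ and justify the marginal identification, which the paper asserts in a single display without comment.
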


\begin{proof}[Proof]
The theorem is a consequence of Lemma \ref{theorem_new}. It is enough to observe that 
\begin{align*}
\begin{aligned}
\|\Pi^n_{h_1,\ldots,h_n}(x,\cdot)-\Pi^n_{h_1,\ldots,h_n}(y,\cdot)\|_{\mathcal{L}}&=\sup_{f\in\mathcal{L}}\Big|\int_{X}f(z)(\Pi^n_{h_1,\ldots,h_n}(x,\cdot)-\Pi^n_{h_1,\ldots,h_n}(y,\cdot))(dz)\Big|\\
&=\sup_{f\in\mathcal{L}}\Big|\int_{X^2}(f(z_1)-f(z_2))(\Pi_{X^2}^*\Pi_n^*\hat{C}_{h_1,h_2,\ldots}^{\infty}((x,y),\cdot))(dz_1\times dz_2)\Big|.
\end{aligned}
\end{align*}
Hence, using the argument of Lemma \ref{theorem_new}, we obtain 
\[\|\Pi^n_{h_1,\ldots,h_n}(x,\cdot)-\Pi^n_{h_1,\ldots,h_n}(y,\cdot)\|_{\mathcal{L}}\leq q^nC_5(1+\bar{V}(x,y)),\]
which finishes the proof.
\end{proof}


\section{Exponential rate of convergence - proof of Theorem $1$}

Note that we may write
\[P^n_{\varepsilon}\mu(\cdot)=\int_X\int_{B(0,\varepsilon)}\ldots\int_{B(0,\varepsilon)}\Pi^n_{h_1,\ldots,h_n}(x,\cdot)\nu^{\varepsilon}(dh_1)\ldots\nu^{\varepsilon}(dh_n)\mu(dx).\]

Comparing this approach with Remark 1 and Lemma 1, we see that $P_{\varepsilon}$ is Feller and, for every $n\in N$, it satisfies the following property 
\begin{align}\label{prop:<V,P^n_epsilon_mu>}
\langle V,P^n_{\varepsilon}\mu\rangle \leq a^n\langle V,\mu\rangle +\frac{c}{1-a}.
\end{align}


We present the proof of Theorem $1$ below.

\begin{proof}[Proof of Theorem 1]

Let $\mu_1,\mu_2\in M_{1}^1(X)$. We first want to evaluate $\|P^n_{\varepsilon}\mu_1-P^n_{\varepsilon}\mu_2\|_{\mathcal{L}}$. Let $f\in\mathcal{L}$. We obtain

\begin{align*}
\begin{aligned}
&|\langle f,P^n_{\varepsilon}\mu_1-P^n_{\varepsilon}\mu_2\rangle |\\&=\Big|\int_X\int_{B(0,\varepsilon)}\ldots\int_{B(0,\varepsilon)}\int_Xf(z)\Pi^n_{h_1,\ldots,h_n}(x,dz)\nu^{\varepsilon}(dh_1)\ldots\nu^{\varepsilon}(dh_n)\mu_1(dx)\\
&\quad 
-\int_X\int_{B(0,\varepsilon)}\ldots\int_{B(0,\varepsilon)}\int_Xf(z)\Pi^n_{h_1,\ldots,h_n}(y,dz)\nu^{\varepsilon}(dh_1)\ldots\nu^{\varepsilon}(dh_n)\mu_2(dy)\Big|\\
&=\Big|\int_X\Big[\int_X\int_{B(0,\varepsilon)}\ldots\int_{B(0,\varepsilon)}\int_Xf(z)\Pi^n_{h_1,\ldots,h_n}(x,dz)\nu^{\varepsilon}(dh_1)\ldots\nu^{\varepsilon}(dh_n)\mu_1(dx)\Big]\mu_2(dy)\\  &\quad -\int_X\Big[\int_X\int_{B(0,\varepsilon)}\ldots\int_{B(0,\varepsilon)}\int_Xf(z)\Pi^n_{h_1,\ldots,h_n}(y,dz)\nu^{\varepsilon}(dh_1)\ldots\nu^{\varepsilon}(dh_n)\mu_2(dy)\Big]\mu_1(dx)\Big|\\
\end{aligned}
\end{align*}

Now, following the result from Theorem \ref{theorem4},
\[\|\Pi^n_{h_1,\ldots,h_n}(x,\cdot)-\Pi^n_{h_1,\ldots,h_n}(y,\cdot)\|_{\mathcal{L}}\leq q^nC_5(1+\bar{V}(x,y)),\]
where $q$ and $C_5$ are independent of choice of $h_1,h_2,\ldots$, we obtain the inequality
\begin{align*}
\begin{aligned}
&|\langle f,P^n_{\varepsilon}\mu_1-P^n_{\varepsilon}\mu_2\rangle |\\
&=\int_X\int_X\Big[\int_{B(0,\varepsilon)}\ldots\int_{B(0,\varepsilon)}\Big|\int_Xf(z)\Pi^n_{h_1,\ldots,h_n}(x,dz)-\int_Xf(z)\Pi^n_{h_1,\ldots,h_n}(y,dz)\Big|\\
&\qquad\nu^{\varepsilon}(dh_1)\ldots\nu^{\varepsilon}(dh_n)\Big]\mu_1(dx)\mu_2(dy)\\
&\leq\int_X\int_X\Big[\int_{B(0,\varepsilon)}\ldots\int_{B(0,\varepsilon)}\|\Pi^n_{h_1,\ldots,h_n}(x,\cdot)-\Pi^n_{h_1,\ldots,h_n}(y,\cdot)\|_{\mathcal{L}}\nu^{\varepsilon}(dh_1)\ldots\nu^{\varepsilon}(dh_n)\Big]\mu_1(dx)\mu_2(dy)\\
&\leq q^nC_5\int_X\int_X\Big[\int_{B(0,\varepsilon)}\ldots\int_{B(0,\varepsilon)}(1+\bar{V}(x,y))\nu^{\varepsilon}(dh_1)\ldots\nu^{\varepsilon}(dh_n)\Big]\mu_1(dx)\mu_2(dy)\\
&=q^nC_5\int_X\int_X(1+\bar{V}(x,y))\mu_1(dx)\mu_2(dy),\\
\end{aligned}
\end{align*}
where measures $\mu_1,\mu_2\in M^1_{1}(X)$.

Now, set $\mu_1:=\mu\in M_1^1(X)$ and $\mu_2:=P^m_{\varepsilon}\mu\in M_1^1(X)$, for arbitrary $m\in N$. Note that it follows form Lemma 1 that $P^m_{\varepsilon}\mu$ is with finite first moment if $\mu\in M_1^1$. We obtain 
\begin{align*}
\begin{aligned}
\|P^n_{\varepsilon}\mu-P^{n+m}_{\varepsilon}\mu\|_{\mathcal{L}}&\leq q^nC_5\int_X\int_X(1+\bar{V}(x,y))\mu(dx)P^m_{\varepsilon}\mu(dy)\\
&=q^nC_5\Big[1+\int_X V(x)\mu(dx)+\int_X V(y)P^m_{\varepsilon}\mu(dy)\Big]\leq q^nC_6
\end{aligned}
\end{align*}
for some constant $C_6$. Hence, $(P^n_{\varepsilon}\mu)_{n\in N}$ is a Cauchy sequence in $(M_1(X),\|\cdot\|_{\mathcal{L}})$. 
It is then proven, because of completeness of the space, that $(P^n_{\varepsilon}\mu)_{n\in N}$ converges in $(M_1(X), \|\cdot\|_{\mathcal{L}})$. Put $\mu_*(\mu)=\lim_{n\to\infty}P^n_{\varepsilon}\mu$. As mentioned before, we know that $P_{\varepsilon}$ is a Feller operator and this impies that the measure $\mu_*(\mu)$ is invariant. Then, for $\mu_1,\mu_2\in M_1^1(X)$ and every $\epsilon>0$, we have
\[\|\mu_*(\mu_1)-\mu_*(\mu_2)\|_{\mathcal{L}}\leq \|\mu_*(\mu_1)-P^n_{\varepsilon}\mu_1\|_{\mathcal{L}}+\|P^n_{\varepsilon}\mu_1-P^n_{\varepsilon}\mu_2\|_{\mathcal{L}}+\|\mu_*(\mu_2)-P^n_{\varepsilon}\mu_2\|_{\mathcal{L}}<\epsilon\]
for sufficiently large $n\in N$. Hence, we have the invariant measure $\mu_*:=\mu_*(\mu)$ which is unique in $M_1^1(X)$. 
We should make it clear that $\mu_*\in M_1^1(X)$. Note that we can take a non-decreasing sequence $(V_k)_{k\in N}$ such that $V_k(y)=\min\{k,V(y)\}$, for every $k\in N$ and $y\in X$. Fix $x\in X$. From the first part of the proof, we know that $\langle f,P^n_{\varepsilon}\delta_x\rangle $ converges to $\langle f,\mu_*\rangle $ for every $f\in \mathcal{L}$, which means, by the Aleksandrov theorem (see Theorem 11.3.3 in \cite{dudley}), that $P^n_{\varepsilon}\delta_x$ converges weakly to $\mu_*$, since both measures are probabilistic. Hence, for all $k\in N$, $V_k\in C(X)$ and we obtain
\[\lim_{n\to\infty}\int_XV_k(y)P^n_{\varepsilon}\delta_x(dy)=\int_XV_k(y)\mu_*(dy).\]
Note that, according to (\ref{prop:<V,P^n_epsilon_mu>}), for every $n\in N$, \[\langle V_k,P^n_{\varepsilon}\delta_x\rangle =a^n\langle V_k,\delta_x\rangle +(1-a)^{-1}c \leq a^nV_k(x)+(1-a)^{-1}c\] 
and, additionally, 
\[\langle V_k,\mu_*\rangle =\lim_{n\to\infty}\langle V_k,P^n_{\varepsilon}\delta_x\rangle \leq (1-a)^{-1}c,\]
so the sequence $(\langle V_k,\mu_*\rangle )_{k\in N}$ is bounded. 
Because $(V_k)_{k\in N}$ is non-negative and non-decreasing, we may use the Monotone Convergence Theorem to obtain
\[\int_XV(y)\mu_*(dy)=\lim_{k\to\infty}\int_XV_k(y)\mu_*(dy).\]
Then, $V$ is integrable with respect to $\mu_*$, so $\mu_*$ is with finite first moment.

Keeping in mind that $\bar{V}(x,y)=V(x)+V(y)$, the exponential rate of convergence to the unique invariant measure $\mu_*\in M_1^1(X)$ derives from the following estimates
\begin{align*}
\begin{aligned}
\|P^n_{\varepsilon}{\mu}-\mu_*\|_{\mathcal{L}}
\leq\int_X\int_X q^nC_5(1+\bar{V}(x,y))\mu_*(dy)\mu(dx)
\leq q^nC,
\end{aligned}
\end{align*}
where $C:=\int_X\int_XC_5(1+\bar{V}(x,y))\mu_*(dy)\mu(dx)<\infty$ for $\mu\in M^1_1(X)$. Finally, since $C$ is dependant only on $\mu$, the proof is complete.

\end{proof}

\section{Central Limit Theorem - proof of Theorem $2$}

Let us first make the follownig observation. 
\begin{lemma}\label{second_moment}
If $\mu\in M_1^2(X)$ is with finite second moment, then $\langle V^2,P^n_{\varepsilon}\mu\rangle \,<\infty$ and therefore $\mu_*:=\mu_*(\mu)=\lim_{n\to\infty}P^n_{\varepsilon}\mu$ has finite second moment.
\end{lemma}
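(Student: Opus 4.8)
The plan is to reproduce the first-moment estimate of Lemma~1 at the level of the second moment, now invoking the strengthened condition (II') to produce a contraction constant $\Lambda<1$. The key step is a one-step recursion for $V^2=\varrho^2(\cdot,\bar{x})$. Fix $h\in\bar{B}(0,\varepsilon)$ and $\kappa\in M_1^2(X)$. By the triangle inequality, the Lipschitz property of $S$, and the bound $\varrho(T_h(\bar{x},t),\bar{x})\le c$ coming from (\ref{def:c}), one has $\varrho(T_h(x,t),\bar{x})\le\lambda(x,t)V(x)+c$ for every $t$. Squaring and integrating $\int_0^T(\,\cdot\,)\,p(x,t)\,dt$, while using $\int_0^T\lambda^2 p\,dt\le\Lambda$ from (\ref{def:b}), $\int_0^T\lambda p\,dt\le a$ from (\ref{def:a}), and $\int_0^Tp\,dt=1$, yields
\begin{align*}
\langle V^2,P_h\kappa\rangle\le\Lambda\langle V^2,\kappa\rangle+2ca\langle V,\kappa\rangle+c^2.
\end{align*}

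Next I would iterate this recursion along the auxiliary model. Writing $m_k:=\langle V^2,P^k_{h_1,\ldots,h_k}\mu\rangle$ and $v_k:=\langle V,P^k_{h_1,\ldots,h_k}\mu\rangle$, the composition rule (\ref{constr:n+1}) gives $m_{k+1}\le\Lambda m_k+2ca\,v_k+c^2$. Lemma~1 bounds $v_k\le\langle V,\mu\rangle+\tfrac{c}{1-a}$ uniformly in $k$ and in the sequence $(h_i)$, so with $D:=2ca\big(\langle V,\mu\rangle+\tfrac{c}{1-a}\big)+c^2$ we obtain $m_{k+1}\le\Lambda m_k+D$ and hence
\begin{align*}
\langle V^2,P^n_{h_1,\ldots,h_n}\mu\rangle\le\Lambda^n\langle V^2,\mu\rangle+\frac{D}{1-\Lambda}
\end{align*}
for every $n$, with a bound independent of $(h_i)$. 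Since the representation of $P^n_{\varepsilon}$ from Section~9 gives $\langle V^2,P^n_{\varepsilon}\mu\rangle=\int\cdots\int\langle V^2,P^n_{h_1,\ldots,h_n}\mu\rangle\,\nu^{\varepsilon}(dh_1)\cdots\nu^{\varepsilon}(dh_n)$ (all integrands non-negative, so Tonelli applies), the same bound passes to $\langle V^2,P^n_{\varepsilon}\mu\rangle$. As $\mu\in M_1^2(X)$ and $\Lambda<1$, this is finite and bounded uniformly in $n$.

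Finally, to transfer finiteness to $\mu_*$, I would use the truncations $V^2_k:=\min\{k,V^2\}$, which are bounded and continuous. Since $P^n_{\varepsilon}\mu\to\mu_*$ in the Fortet--Mourier norm by Theorem~1, hence weakly, we get $\langle V^2_k,\mu_*\rangle=\lim_{n\to\infty}\langle V^2_k,P^n_{\varepsilon}\mu\rangle\le\langle V^2,\mu\rangle+\tfrac{D}{1-\Lambda}$ for each $k$, a bound uniform in $k$; the Monotone Convergence Theorem then gives $\langle V^2,\mu_*\rangle=\lim_{k\to\infty}\langle V^2_k,\mu_*\rangle<\infty$. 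The only delicate point is the recursion itself: the cross term forces one to carry along the first-moment estimate of Lemma~1, and the contractivity of the entire scheme rests on having $\Lambda<1$ from (II') rather than merely $a<1$ from (II).
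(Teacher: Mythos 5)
Your proof is correct, and for the main estimate it takes a genuinely different route from the paper's. The paper expands $\langle V^2,P^n_{h_1,\ldots,h_n}\mu\rangle$ directly as an $n$-fold integral and telescopes $\varrho^2(\tilde{x}^{\bar{x}}_n,\bar{x})$ by repeated application of $(u+v)^2\leq 2u^2+2v^2$; each application contributes a factor of $2$, so the resulting geometric series has ratio $2\Lambda$ and the closed form $4c^2(1-2\Lambda)^{-1}$ only converges when $\Lambda<1/2$ — indeed the paper's proof opens with ``Recall that $\Lambda<1/2$,'' even though condition (II') as stated only guarantees $\Lambda<1$. Your one-step affine recursion
\begin{align*}
\langle V^2,P_h\kappa\rangle\leq\Lambda\langle V^2,\kappa\rangle+2ca\langle V,\kappa\rangle+c^2,
\end{align*}
with the cross term absorbed by the uniform first-moment bound of Lemma~1, avoids the doubling entirely: the iterated bound $\Lambda^n\langle V^2,\mu\rangle+D(1-\Lambda)^{-1}$ holds under the stated hypothesis $\Lambda<1$ and is uniform in $n$ and in $(h_i)_{i\in N}$, which is all that is needed. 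The passage to $P^n_{\varepsilon}$ via Tonelli and the final step (truncation by $V_k^2=\min\{k,V^2\}$, weak convergence of $P^n_{\varepsilon}\mu$ to $\mu_*$, Monotone Convergence) coincide with the paper's argument. In short, your version proves the lemma under the weaker, actually stated assumption and with cleaner constants; the paper's version is more direct but silently strengthens (II') to $\Lambda<1/2$.
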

\begin{proof}[Proof]
Let $\mu\in M_1^2(X)$. Fix $x\in X$, $n\geq 1$. Recall that $\Lambda<1/2$ and $c$ are given by (\ref{def:b}) and (\ref{def:c}), respectively. Moreover, $\tilde{x}^x_n$ is of the form (\ref{def:tilde_x}). Reasoning as in Lemma 1, we obtain

\begin{align*}
\begin{aligned}
&\langle V^2,P^n_{h_1,\ldots,h_n}\mu\rangle \\&
=\int_X\Big[\int_0^T\ldots\int_0^T\varrho(\tilde{x}^x_n,\bar{x})p(\tilde{x}^x_{n-1},t_{n})\ldots p(x,t_1)dt_n\ldots dt_1\Big]\mu(dx)\\
&\leq\int_X\int_0^T\ldots\int_0^T\Big[2\varrho^2(\tilde{x}^x_n,\tilde{x}^{\bar{x}}_n)+2\varrho^2(\tilde{x}^{\bar{x}}_n,\bar{x})\Big]p(\tilde{x}^x_{n-1},t_{n})\ldots p(x,t_1)dt_n\ldots dt_1\mu(dx)\\
&\leq 2\int_X\int_0^T\ldots\int_0^T\Big[\lambda^2(\tilde{x}^x_n,t_n)\ldots\lambda^2(x,t_1)\varrho^2(x,\bar{x})+2\varrho^2(\bar{x},\tilde{x}^{\bar{x}}_1)+4\varrho^2(\tilde{x}^{\bar{x}}_1,\tilde{x}^{\bar{x}}_2)+4\varrho^2(\tilde{x}^{\bar{x}}_2,\tilde{x}^{\bar{x}}_n)\Big]\\
&\qquad p(\tilde{x}^x_{n-1},t_{n})\ldots p(x,t_1)dt_n\ldots dt_1\mu(dx)\\
&\leq 2\Lambda^n\langle V^2,\mu\rangle +2^2c^2(1+2\Lambda+\ldots+2^n\Lambda^n)\\
&\leq 2\Lambda^n\langle V^2,\mu\rangle +4\frac{c^2}{1-2\Lambda},
\end{aligned}
\end{align*}
Estimates are independent of choice of sequence $(h_n)_{n\in N}$ and therefore $\langle V^2,P^n_{\varepsilon}\mu\rangle \,<\;2\Lambda^n\langle V^2,\mu\rangle +4c^2(1-2\Lambda)^{-1}$. We take a non-decreasing sequence $(V^2_k)_{k\in N}$ such that $V^2_k(y)=\min\{k,V^2(y)\}$, for every $k\in N$ and $y\in X$. We know that $P^n_{\varepsilon}\mu$ converges weakly to $\mu_*$. Hence, for all $k\in N$, $V^2_k\in C(X)$ and
\[\lim_{n\to\infty}\langle V^2_k,P^n_{\varepsilon}\mu\rangle =\langle V^2_k,\mu_*\rangle \,<\;4c^2(1-2\Lambda)^{-1},\]
so the sequence $(\langle V^2_k,\mu_*\rangle )_{k\in N}$ is bounded. 
Because $(V^2_k)_{k\in N}$ is non-negative and non-decreasing, we may use the Monotone Convergence Theorem to obtain
\[\langle V^2,\mu_*\rangle =\lim_{k\to\infty}\langle V^2_k,\mu_*\rangle \,\]
so, indeed, $\mu_*$ is with finite second moment.
\end{proof}

Let $\eta_n^{\mu}$ and $\Phi\eta_n^\mu$ be as in Section $\ref{sec:idea}$. In particular, $\eta_n^*$ and $\eta_n^x$ are defined for the Markov chains with the same transition probability function $\Pi_{\varepsilon}$ and initial distributions $\mu_*$ and $\delta_x$, respectively. Further, let $g:X\to R$ be a bounded and Lipschitz continuous function, with constant $L_g$, which satisfies $\langle g,\mu_*\rangle =0$.

Central Limit Theorems for ergodic stationary Markov chains have already been proven in many papers. See, for example, Theorem $1$ and the subsequent Corollary $1$ in \cite{woodr} by Maxwell and Woodroofe. 
The following lemma implies that assumptions of Theorem 1 and Corollary 1 (\cite{woodr}) are satisfied.

\begin{lemma}
Let $g:X\to R$ be a bounded and Lipschitz continuous function with constant $L_g$. Additionally, $\langle g,\mu_*\rangle =0$. Then,
\begin{align}\label{condition_woodroofe}
\begin{aligned}
\sum_{n=1}^{\infty}n^{-3/2}\Big[\int_X\Big(\sum_{k=0}^{n-1}\langle g,P^{k}_{\varepsilon}\delta_x\rangle \Big)^2\mu_*(dx)\Big]^{1/2}<\infty
\end{aligned}
\end{align}
\end{lemma}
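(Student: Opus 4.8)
The plan is to show that the inner partial sum $\sum_{k=0}^{n-1}\langle g,P^k_{\varepsilon}\delta_x\rangle$ is in fact bounded \emph{uniformly} in $n$ by a function of $x$ that is square-integrable against $\mu_*$; once that is established, the series in (\ref{condition_woodroofe}) is dominated by a constant multiple of $\sum_{n=1}^{\infty}n^{-3/2}$, which converges. The starting point is the normalization $\langle g,\mu_*\rangle=0$ together with the invariance $\mu_*=P^k_{\varepsilon}\mu_*$. These let me write, for each $k$,
\[
\langle g,P^k_{\varepsilon}\delta_x\rangle=\langle g,P^k_{\varepsilon}\delta_x\rangle-\int_X\langle g,P^k_{\varepsilon}\delta_y\rangle\,\mu_*(dy)=\int_X\big(\langle g,P^k_{\varepsilon}\delta_x\rangle-\langle g,P^k_{\varepsilon}\delta_y\rangle\big)\,\mu_*(dy),
\]
thereby reducing a single orbit to a difference of two orbits, which is exactly the quantity controlled by the coupling.

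Next I would insert the representation $P^k_{\varepsilon}\delta_x(\cdot)=\int_{B(0,\varepsilon)}\cdots\int_{B(0,\varepsilon)}\Pi^k_{h_1,\ldots,h_k}(x,\cdot)\,\nu^{\varepsilon}(dh_1)\cdots\nu^{\varepsilon}(dh_k)$ and recognize the integrand $\langle g,\Pi^k_{h_1,\ldots,h_k}(x,\cdot)\rangle-\langle g,\Pi^k_{h_1,\ldots,h_k}(y,\cdot)\rangle$ as $\int_{X^2}(g(z_1)-g(z_2))(\Pi_{X^2}^*\Pi_k^*\hat{C}^{\infty}_{h_1,h_2\ldots}((x,y),\cdot))(dz_1\times dz_2)$, exactly as in the proof of Theorem~\ref{theorem4}. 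Remark~\ref{rem_g} then bounds its absolute value by $Gq^kC_5(1+\bar{V}(x,y))$, uniformly in the $h_i$. Since the $\nu^{\varepsilon}$ are probability measures, the same bound survives after integration in $h_1,\ldots,h_k$; integrating against $\mu_*$ and using $\bar{V}(x,y)=V(x)+V(y)$ together with the finiteness of the first moment $\langle V,\mu_*\rangle$ (established in the proof of Theorem~\ref{main}), I obtain the pointwise exponential estimate
\[
|\langle g,P^k_{\varepsilon}\delta_x\rangle|\leq GC_5\,q^k\big(1+V(x)+\langle V,\mu_*\rangle\big).
\]

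Summing this geometric bound over $k$ gives $\big|\sum_{k=0}^{n-1}\langle g,P^k_{\varepsilon}\delta_x\rangle\big|\leq \frac{GC_5}{1-q}\big(1+V(x)+\langle V,\mu_*\rangle\big)$, a bound independent of $n$. Squaring, using $(1+V(x)+\langle V,\mu_*\rangle)^2\leq 3(1+V(x)^2+\langle V,\mu_*\rangle^2)$, and integrating against $\mu_*$, the decisive input is Lemma~\ref{second_moment}, which guarantees $\langle V^2,\mu_*\rangle<\infty$; hence $\big[\int_X(\sum_{k=0}^{n-1}\langle g,P^k_{\varepsilon}\delta_x\rangle)^2\mu_*(dx)\big]^{1/2}$ is bounded by a finite constant independent of $n$, and the claimed series converges. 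I expect the only genuinely delicate step to be the identification of the orbit difference with the coupling expression and the justification of interchanging the order of integration so that Remark~\ref{rem_g} applies uniformly in the perturbations $h_1,\ldots,h_k$; everything afterwards is a geometric sum and a second-moment estimate, so the Maxwell--Woodroofe condition holds with considerable room to spare, precisely because we have exponential rather than merely summable decay.
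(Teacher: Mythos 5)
Your proposal is correct and follows essentially the same route as the paper: centering $\langle g,P^k_{\varepsilon}\delta_x\rangle$ by $\langle g,\mu_*\rangle$ via invariance, identifying the resulting orbit difference with the coupling expression so that Remark \ref{rem_g} yields the bound $Gq^kC_5(1+\bar{V}(x,y))$ uniformly in the $h_i$, summing the geometric series to get a bound of the form $C(1+V(x))$ independent of $n$, and finishing with the finite second moment of $\mu_*$ from Lemma \ref{second_moment}. The only difference is cosmetic (you integrate against $\mu_*$ before summing over $k$ rather than after), so nothing further is needed.
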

\begin{proof}[Proof]
Note that, by Lemma \ref{theorem_new} and Remark \ref{rem_g}, 
\begin{align*}
\begin{aligned}
\sum_{k=0}^{n-1}\langle g,P^{k}_{\varepsilon}\delta_x\rangle &=\sum_{k=0}^{n-1}\Big(\langle g,P^{k}_{\varepsilon}\delta_x\rangle -\langle g,\mu_*\rangle \Big)\\
&=\sum_{k=0}^{n-1}\int_X\Big[\int_Xg(z)(\Pi^k_{\varepsilon}(x,\cdot)-\Pi^k_{\varepsilon}(y,\cdot))(dz)\Big]\mu_*(dy)\\
&=\sum_{k=0}^{n-1}\int_X\Big[\int_{X^2}(g(z_1)-g(z_2))(\Pi^*_{X^2}\Pi^*_k\hat{C}^{\infty}_{h_1,h_2,\ldots}((x,y),\cdot))(dz_1\times dz_2)\Big]\mu_*(dy)\\
&\leq \sum_{k=0}^{n-1}Gq^nC_5\int_{X^2}(1+\bar{V}(x,y))\mu_*(dy).
\end{aligned}
\end{align*}
Then, for every $x\in X$, $n\in N$,
\[\sum_{k=0}^{n-1}\langle g,P^{k}_{\varepsilon}\delta_x\rangle \leq GC_5\frac{1-q^n}{1-q}\int_{X^2}(1+\bar{V}(x,y))\mu_*(dy)\leq C_9(1+V(x)),\]
where $C_9:=GC_5(1-q)^{-1}(1+\int_XV(y)\mu_*(dy))$. Keeping in mind that $\mu_*$ is with finite second moment, we obtain that $(\ref{condition_woodroofe})$ is not bigger than
\begin{align*}
\begin{aligned}
\sum_{n=1}^{\infty}n^{-3/2}[C_9^2\langle 1+2V+V^2,\mu_*\rangle ]^{1/2}<\infty 
\end{aligned}
\end{align*} 
and the proof is complete.
\end{proof}

Hence, by applying Corollary $1$, we obtain that $\Phi\eta_n^*$ converges to the normal distribution in Levy metric, as $n\to\infty$, which equivalently means that the distributions converge weakly to each other (see \cite{kurtz} for proofs).

Now, the idea of the proof is based on the following remark.

\begin{remark}\label{enough}
Note that, to complete the proof of Theorem $\ref{CTG}$, it is enough to establish that $\Phi{\eta_n^{\mu}}$ converges weakly to $\Phi{\eta_n^*}$, as $n\to\infty$. Equivalently, it is enough to show that $\lim_{n\to\infty}\|\Phi{\eta_n^{\mu}}-\Phi{\eta_n^*}\|_{\mathcal{L}}=0$, since weak convergence is metrised by the Fourtet-Mourier norm.
\end{remark}

\begin{proof}[Proof of Theorem 2]
Set $x,y\in X$ and choose arbitrary $f\in\mathcal{L}$. Suppose that we know that the following convergence is satisfied, as $n\to\infty$,
\begin{align}\label{prop:xy}
\begin{aligned}
\Big|\int_Rf(u)\Phi\eta_n^x(du)-\int_Rf(v)\Phi\eta_n^y(dv)\Big|\to 0.
\end{aligned}
\end{align}
Then, by the Dominated Convergence Theorem, we obtain
\begin{align}\label{proven}
\begin{aligned}
&\Big|\int_Rf(u)\Phi\eta_n^{\mu}(du)-\int_Rf(v)\Phi\eta_n^*(dv)\Big|\\
&\qquad\leq\int_X\int_X\Big|\int_Rf(u)\Phi\eta_n^x(du)-\int_Rf(v)\Phi\eta_n^y(dv)\Big|\mu(dx)\mu_*(dy)
\to 0,
\end{aligned}
\end{align}
as $n\to\infty$. Note that, by Theorem 11.3.3 in \cite{dudley}, $(\ref{proven})$ implies that $\Phi{\eta_n^{\mu}}$ converges weakly to $\Phi{\eta_n^*}$, as $n\to\infty$, which, according to Remark $\ref{enough}$, completes the proof of the CLT in the model. 
Now, it remains to show $(\ref{prop:xy})$. Note that
\begin{align}\label{prop:pre-estimate}
\begin{aligned}
\Big|\int_Rf(u)\Phi\eta_n^x(du)-\int_Rf(v)\Phi\eta_n^y(dv)\Big|
&=\Big|\int_{X^n}f\Big(\frac{g(u_1)+\ldots+g(u_n)}{\sqrt{n}}\Big)\Pi_{\varepsilon}^{1,\ldots,n}(x,du_1\times\ldots\times du_n)\\
&\qquad - \int_{X^n}f\Big(\frac{g(v_1)+\ldots+g(v_n)}{\sqrt{n}}\Big)\Pi_{\varepsilon}^{1,\ldots,n}(y,dv_1\times\ldots\times dv_n)  \Big|,
\end{aligned}
\end{align}
where $\Pi_{\varepsilon}^{1,\ldots,n}(x,\cdot)=
\int_{B(0,\varepsilon)}\ldots\int_{B(0,\varepsilon)}\Pi_{h_1,\ldots,h_n}^{1,\ldots,n}(x,\cdot)\nu^{\varepsilon}(dh_1)\ldots\nu^{\varepsilon}(dh_n)$ is a measure on $X^n$. We may write
\begin{align}\label{estimate}
\begin{aligned}
&\Big|\int_{X^n}\int_{X^n}\Big[f\Big(\frac{g(u_1)+\ldots+g(u_n)}{\sqrt{n}}\Big) - f\Big(\frac{g(v_1)+\ldots+g(v_n)}{\sqrt{n}}\Big)\Big]\\
&\qquad\qquad\qquad\Pi_{h_1,\ldots,h_n}^{1,\ldots,n}(x,du_1\times\ldots\times du_n)\Pi_{h_1,\ldots,h_n}^{1,\ldots,n}(y,dv_1\times\ldots\times dv_n)  \Big|\\
&\leq\int_{(X^2)^n}\Big| f\Big(\frac{g(u_1)+\ldots+g(u_n)}{\sqrt{n}}\Big) - f\Big(\frac{g(v_1)+\ldots+g(v_n)}{\sqrt{n}}\Big) \Big|\\ &\qquad\qquad\qquad\Big(\Pi^*_{X^{2n}}\Pi^*_{1,\ldots,n}\hat{C}^{\infty}_{h_1,h_2\ldots}((x,y),\cdot)\Big)(du_1\times\ldots\times du_n\times dv_1\times\ldots\times dv_n),
\end{aligned}
\end{align}
where $\Pi^*_{1,\ldots,n}:(X^2\times\{0,1\})^{\infty}\to(X^2\times\{0,1\})^n$ are the projections on the first $n$ components and $\Pi^*_{X^{2n}}:(X^2\times\{0,1\})^n\to X^{2n}$ is the projection on $X^{2n}$. 
Since $f$ is Lipschitz with constant $L_f$, we may further estimate $(\ref{estimate})$ from above
\begin{align*}
\begin{aligned}
&\frac{L_f}{\sqrt{n}}\int_{X^{2n}}\Big[|g(u_1)-g(v_1)|+\ldots+|g(u_n)-g(v_n)|\Big]
(\Pi^*_{X^{2n}}\Pi^*_{1,\ldots,n}\hat{C}^{\infty}_{h_1,h_2\ldots}((x,y),\cdot)\Big)((du_i\times dv_i)_{i=1}^n)\\
&= \frac{L_f}{\sqrt{n}}\sum_{i=1}^n\int_{X^2}|g(u_i)-g(v_i)|
(\Pi^*_{X^{2}}\Pi^*_{i}\hat{C}^{\infty}_{h_1,h_2\ldots}((x,y),\cdot)\Big)(du_i\times dv_i).
\end{aligned}
\end{align*}
Now, for every $1\leq i\leq n$, we refer to Lemma \ref{theorem_new} and Remark \ref{rem_g} to obseve that  $(\ref{estimate})$ is not bigger than
\[\frac{L_f G}{\sqrt{n}}\sum_{i=1}^nq^iC_5(1+\bar{V}(x,y))=n^{-\frac{1}{2}}L_fGC_5q\frac{1-q^n}{1-q}(1+\bar{V}(x,y)).\]
Note that the expression above is independent of choice of sequence $(h_n)_{n\in N}$ and, thanks to this, is also the upper bound of $(\ref{prop:pre-estimate})$. We go with $n$ to infinity and obtain $(\ref{prop:xy})$. The proof is complete.
\end{proof}

\vspace{10mm}

\bibliographystyle{apsrev}

\end{document}